\documentclass[11pt,reqno]{amsart}
\usepackage[width=150mm,height=220mm,
centering,
]{geometry}
\usepackage{mathrsfs}
\usepackage{amsxtra}
\usepackage{amsmath,amsfonts,amssymb}
\usepackage{upgreek}
\usepackage{xcolor}
\usepackage[numbers,sort&compress]{natbib}
\usepackage{csquotes}
\numberwithin{equation}{section}
\allowdisplaybreaks
\ifcsname proof\endcsname
  \else

\fi

\newtheorem{theorem}{Theorem}[section]
\newtheorem{proposition}[theorem]{Proposition}
\newtheorem{lemma}[theorem]{Lemma}

\newtheorem{definition}[theorem]{Definition}
\newtheorem{remark}[theorem]{Remark}
\newtheorem{corollary}[theorem]{Corollary}
\newtheorem{conjecture}{Conjecture}[section]

\begin{document}

\parindent 2em
\title[Pointwise Convergence of Schr\"{o}dinger Operators]
{Pointwise Convergence of Schr\"{o}dinger Operators in Bessel Potential Spaces}
\author[Y. Pan]{Yucheng Pan}
\address{
School of Mathematical Sciences and LPMC\\
Nankai University\\
Tianjin\\
China}

\email{panyucheng@mail.nankai.edu.cn}

\author[W. Sun]{Wenchang Sun}
\address{
School of Mathematical Sciences and LPMC\\
Nankai University\\
Tianjin\\
China}

\email{sunwch@nankai.edu.cn}

\author[J. Tan]{Jiheng Tan}
\address{
School of Mathematical Sciences and LPMC\\
Nankai University\\
Tianjin\\
China}

\email{tanjh@mail.nankai.edu.cn}

\thanks{Corresponding author: Wenchang Sun.}

\subjclass[2020]{46E35, 42B37.}
\keywords{Schr\"{o}dinger operator, Pointwise convergence, Bessel potential space}

\begin{abstract}
We study the pointwise convergence of solutions to the free Schr\"{o}dinger equation with initial data in the Bessel potential spaces $L_s^p(\mathbb{R}^n)$.
We establish new sufficient regularity indices for pointwise convergence across the full range $1 \leq p < \infty$, and demonstrate via counterexamples that these indices are sharp for all $1 \leq p \leq 2$ in one dimension, as well as for $p=1$ 
or $p$  large enough in higher dimensions.
The proofs rely on the high-dimensional stationary phase method.
\end{abstract}

\thanks{This work was supported by the
National Natural Science Foundation of China (No. 12571104 and 12271267).}

\maketitle
\date{\today}

\section{Introduction}\label{s1}
The free Schr\"{o}dinger equation plays a central role in the theory of dispersive partial differential equations.
In the Euclidean space $\mathbb{R}^{n}$, the initial value problem is given by
\begin{equation}\label{seq}
\begin{cases}
i\partial_t u(x,t) + \Delta u(x,t) = 0, \qquad (x,t) \in \mathbb{R}^{n} \times \mathbb{R}^{+}, \\
u(x,0) = f(x).
\end{cases}
\end{equation}
The formal solution to \eqref{seq} can be expressed as
\begin{equation*}
e^{it\Delta}f(x) = \frac{1}{(2\pi)^{n}}\int_{\mathbb{R}^n} e^{ix \cdot \xi - it|\xi|^2} \widehat{f}(\xi) \mathrm{d}\xi,
\end{equation*}
where $\widehat{f}$ is the Fourier transform of $f$.

A fundamental problem, posed by Carleson \cite{C} in 1979, is to determine the optimal regularity index $s>0$ such that
\begin{equation*}
\lim_{t\to 0^{+}} e^{it\Delta}f(x)=f(x) \quad \text{a.e.}
\end{equation*}
whenever $f\in H^{s}(\mathbb{R}^{n})$. By the standard density argument, this pointwise convergence problem reduces to establishing the local boundedness of the associated maximal operator:
\begin{equation*}
\left\|\sup_{0<t<1}\left|e^{it\Delta}f\right|\right\|_{L^p(B(0,1))}\lesssim \|f\|_{H^s(\mathbb{R}^n)}.
\end{equation*}

Historically, the study of the maximal Schr\"{o}dinger operator on $H^{s}(\mathbb{R}^{n})$ has focused on determining the optimal regularity index. In one dimension, Carleson \cite{C} proved that $s\geq 1/4$ is sufficient for pointwise convergence, which was later shown to be sharp by Dahlberg and Kenig \cite{DK}.

In higher dimensions, Sj\"{o}lin \cite{S87} and Vega \cite{V} independently established the sufficiency for $s>1/2$. By exploiting the Fourier restriction theory, Bourgain \cite{B95} provided an alternative proof for $s>1/2$ when $n=2$. This index was subsequently improved to $s>\kappa$, $20/41<\kappa<41/84$ by Moyua, Vargas and Vega \cite{MVV}.
By introducing the method of bilinear restriction estimates, Tao and Vargas \cite{TV} further lowered the requirement to $s>15/32$ for $n=2$. Building upon their framework, Lee \cite{L} improved the two dimensional index to $s> 3/8$.
In 2013, Bourgain \cite{B13} established pointwise convergence for $1/2-1/(4n)$ and demonstrated its failure for $s<1/2-1/n$ when $n\geq 4$.
Later, Bourgain \cite{B16} constructed geometric counterexamples showing that pointwise convergence fails for $s< n/(2n+2)$ in all dimensions.
Utilizing $\ell^2$-decoupling and polynomial partitioning, the optimal sufficiency condition $s>n/(2n+2)$ was ultimately established by Du, Guth, and Li \cite{DGL} for $n=2$, and by Du and Zhang \cite{DZ} for all $n\geq 3$.

While the pointwise convergence problem is resolved
for initial data in $H^s(\mathbb{R}^n)$ up to the
endpoint $s=n/(2n+2)$, the analogous question in the general Bessel potential spaces $L_s^p(\mathbb{R}^n)$ ($p \neq 2$) remains. These function spaces are defined as follows.

\begin{definition}\cite[Definition 1.3.2]{G250}
Let $s \in \mathbb{R}$ and $1\leq p\leq \infty$. The Bessel potential space $L_s^{p}(\mathbb{R}^{n})$ is defined as the space of all tempered distributions $f \in \mathcal{S}'(\mathbb{R}^{n})$ such that $((1+|\xi|^{2})^{s/2}\widehat{f})^{\vee} \in L^p(\mathbb{R}^{n})$. For such distributions $f$, we define the norm
\begin{equation*}
\|f\|_{L_s^{p}(\mathbb{R}^{n})}=\|((1+|\cdot|^{2})^{s/2}\widehat{f})^{\vee}\|_{L^p(\mathbb{R}^{n})}.
\end{equation*}
\end{definition}

By definition, it is clear that $L_s^2(\mathbb{R}^n)$ coincides with the classical Sobolev space $H^s(\mathbb{R}^n)$ for any $s \in \mathbb{R}$. We note that if $f_s=((1+|\cdot|^{2})^{s/2}\widehat{f})^{\vee}$, then
\begin{equation*}
f=(\widehat{f_s}\widehat{G_s})^{\vee}=f_s*G_s:=\mathcal{J}^{-s}f_s,
\end{equation*}
where $G_s$ is the Bessel kernel given by
\begin{equation*}
G_s=((1+|\cdot|^{2})^{-{s}/{2}})^{\vee}.
\end{equation*}

For the case $p>2$, the standard approach to the pointwise convergence problem shifts from bounding the maximal operator directly to establishing local smoothing estimates.
\begin{conjecture}[Local Smoothing Conjecture]\label{local_smoothing}
Fix a number $\alpha\in (1,\infty)$. For $p>2(n+1)/n$, it holds that
\begin{equation}\label{local}
\left\| e^{it\Delta^{\alpha/2}} f\right\|_{L^{p}(\mathbb{R}^{n}\times [0,1])}\lesssim \|f\|_{L_s^{p}(\mathbb{R}^{n})}.
\end{equation}
for every $s>\alpha s_{0}=\alpha(n(1/2-1/p)-1/p)$.
\end{conjecture}
Following the arguments in \cite[Sections 4 and 5]{RS}, the local smoothing estimate \eqref{local} implies the corresponding maximal bound:
\begin{equation*}
\left\|\sup_{0<t<1}\left|e^{it\Delta^{\alpha/2}} f\right|\right\|_{L^{p}(B(0,1))}\lesssim \|f\|_{L_{s+\alpha/p}^{p}(\mathbb{R}^{n})}.
\end{equation*}
Although this deduction incurs an $\alpha/p$ loss in regularity, the local smoothing perspective remains an effective mechanism for obtaining  pointwise convergence results when $p > 2$.

For the standard Schr\"{o}dinger equation ($\alpha=2$), Rogers \cite{R} initially established \eqref{local} for $p>2+4/(n+1)$ and $s>2s_0$. This was later extended by Rogers and Seeger \cite{RS} to all fractional orders $\alpha>1$ with $s\geq \alpha s_0$.
For $\alpha=2$, Lee, Rogers, and Seeger \cite{LRS} further improved the range of integrability to $p>2+4/n-2/(n^2-n(4-q_0)/(q_0-2))$ for $q_0\in (2,2(n+3)/(n+1))$, and they also established the necessary conditions $s\geq 2s_0$ and $s> 0$ via scaling arguments.

With the development of modern harmonic analysis tools, Guo, Roos, and Yung \cite{GRY} obtained local smoothing estimates for $s>\alpha s_0$ in the range
\begin{equation*}
p>\begin{cases}
\frac{2n+3}{n} &\text{if}\,\, n\equiv 0 \pmod 3,\\
\frac{4(n+2)}{2n+1} &\text{if}\,\, n\equiv 1 \pmod 3,\\
\frac{2(4n+7)}{4n+1} &\text{if}\,\, n\equiv 2 \pmod 3.
\end{cases}
\end{equation*}
Gao, Miao, and Zheng \cite{GMZ} subsequently improved this result to $p>2(3n+4)/(3n)$ for even $n$, and $p>2(3n+5)/(3n+1)$ for odd $n$. Later, Gan, Oh, and Wu \cite{GOW} proved the following result.
\begin{proposition}
For $\alpha$ in the range below
\begin{equation*}
\begin{cases}
\alpha\in (0,1) \cap (1,\infty), \qquad \text{for} \,\, n=3,\\
\alpha \in (1,\infty), \qquad \text{for} \,\, n>3
\end{cases}
\end{equation*}
and
\begin{equation}\label{p*}
p\geq 2+p_n(k)=2+\frac{6}{2n+(k-1)\prod_{i=k}^{n}2i/(2i+1)}, \quad 2\leq k\leq n,
\end{equation}
\eqref{local} holds for all $s> \alpha s_0$.
\end{proposition}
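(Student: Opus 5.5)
This proposition is the local smoothing theorem of Gan, Oh and Wu, quoted here as a black box. A faithful plan therefore reproduces the architecture of \cite{GOW} rather than anything built from earlier parts of this paper. We will prove the estimate \eqref{local} at a fixed dyadic frequency scale and then sum.

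First we reduce to frequency-localized data. Apply a Littlewood--Paley decomposition $f=\sum_j P_jf$. For $\widehat{f}$ supported in $\{|\xi|\sim 2^j\}$, parabolic rescaling $\xi\mapsto 2^j\xi$, $t\mapsto 2^{-\alpha j}t$ turns the claim into an $L^p$ estimate for the extension operator of the surface $\{(\xi,|\xi|^\alpha):|\xi|\sim 1\}$ over a spacetime region of size $R^{n}\times R$ with $R=2^{(\alpha-1)j}$. The target bound is $R^{s_0+\varepsilon}$, up to the Jacobian factors, and this is exactly what produces $s>\alpha s_0$ after summation in $j$. For $\alpha\neq 2$ the surface is not a paraboloid, but on $|\xi|\sim1$ it has nonvanishing Gaussian curvature. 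The exceptional curvature issue in the $n=3$, $\alpha<1$ case is handled by the same rescaling, since only $\alpha\neq1$ is used.

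The core step is a broad--narrow (Bourgain--Guth) analysis combined with multilinear restriction and decoupling.
\begin{itemize}
\item Decompose frequency space into caps of radius $K^{-1}$.
\item At each point, either $k$ transversal caps contribute significantly (the $k$-broad part), or the main contribution lies near a $(k-1)$-dimensional subspace (the narrow part).
\item For the narrow part, use $\ell^2$ decoupling in lower dimensions (Bourgain--Demeter) together with induction on scales. Lorentz rescaling reduces it to the same estimate at scale $R/K^2$.
\item For the $k$-broad part, use the $k$-broad restriction estimates of Guth, proved by polynomial partitioning, with the refined exponents of Hickman--Rogers / Hickman--Zahl type. The product $\prod_{i=k}^n 2i/(2i+1)$ in \eqref{p*} strongly suggests iterated polynomial partitioning with nested varieties of dimensions $n, n-1,\dots,k$, each stage gaining a factor $2i/(2i+1)$.
\end{itemize}
Optimizing the parameter $k\in[2,n]$ yields the threshold $p\ge 2+p_n(k)$.

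Finally, the broad-norm estimate must be converted back to a genuine $L^p$ local smoothing bound. This requires the local smoothing square-function/decoupling bookkeeping: $L^p$ of the solution is dominated by $\ell^p$ sums over wave packets. It also requires matching the wave-packet $L^2$ mass with $\|f\|_{L^p}$ via Hölder on caps, which is where the loss $s_0=n(1/2-1/p)-1/p$ appears. We then interpolate with the trivial $L^\infty$ and sharp $L^2$ bounds to cover all $p$ above the threshold.

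The main obstacle is the $k$-broad estimate in the variety-iteration step, with exponents sharp enough to give $p_n(k)$. We would first try to import the nested polynomial Wolff axiom machinery of Hickman--Rogers directly, adapting it to the surface $|\xi|^\alpha$. Since that surface has nonvanishing curvature on $|\xi|\sim1$, the required transversality and tube-incidence properties should persist.
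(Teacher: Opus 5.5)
The paper gives no proof of this proposition---it is imported from \cite{GOW}---so your outline has to stand on its own, and as written it has genuine gaps. The most concrete one is the case $\alpha<1$: it is not true that only $\alpha\neq 1$ matters. The Hessian of $|\xi|^\alpha$ has eigenvalues $\alpha|\xi|^{\alpha-2}$ (multiplicity $n-1$) and $\alpha(\alpha-1)|\xi|^{\alpha-2}$. So for $0<\alpha<1$ the surface $\{(\xi,|\xi|^\alpha)\}$ has nonvanishing but \emph{indefinite} curvature, and its parabolically rescaled caps converge to hyperbolic paraboloids. The $k$-broad machinery of Guth and Hickman--Rogers (polynomial partitioning, transverse equidistribution, the multilinear inputs, the narrow-case rescaling) is built for positive-definite second fundamental form, and it fails in that form for indefinite signature. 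A hyperbolic paraboloid contains lines; taking data on an $R^{-1}$-neighbourhood of one of them shows that already in $\mathbb{R}^{2+1}$ the $2$-broad $L^2\to L^p$ bound fails for $p<4$, whereas it holds for $p\ge 10/3$ in the elliptic case. So $\alpha<1$ needs separate hyperbolic-type input, which is why the statement admits $\alpha<1$ only in one low dimension. There are also smaller slips. With your rescaling the time interval becomes $[0,2^{\alpha j}]$, so $R=2^{\alpha j}$, not $2^{(\alpha-1)j}$; only then does the needed bound $R^{s_0+1/p+\varepsilon}$ match $s>\alpha s_0$, and for $\alpha<1$ your $R$ would tend to $0$. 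And the available symmetry is approximate parabolic rescaling within a stable class of phases, not Lorentz rescaling.

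The second gap is the step you treat as bookkeeping: converting broad bounds into an $L^p\to L^p$ estimate. Using $\|f\|_2\le R^{n(1/2-1/p)}\|f\|_p$ on $B_R$ (H\"older, for data localized there) turns a pure-$L^2$ $k$-broad bound into exactly the sharp loss. But Guth's $L^2$-based broad bounds are sharp at his exponent $2+\frac{4}{n+k-1}$ (in $\mathbb{R}^{n+1}$), so they only give Guth-type ranges, as in \cite{GMZ}. The Hickman--Rogers gain (the factor $\prod_{i=k}^n\frac{2i}{2i+1}$, from the polynomial Wolff axioms) comes with an $L^\infty$-type factor on the right, of the form $\|g\|_2^{2/p}\|g\|_\infty^{1-2/p}$ with $g=\widehat f$. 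This cannot be paid for by $\|f\|_{L^p}$: take $N$ spatially separated wave packets with a common frequency cap, phased to add coherently in $\widehat f$; this quantity then exceeds $R^{n(1/2-1/p)}\|f\|_{L^p}$ by $N^{1-2/p}$. You need the $L^\infty$-type quantity to live on physical-space data. One way is a version of the broad estimate with the largest wave-packet coefficient in place of $\|\widehat f\|_\infty$; then dyadic pigeonholing and H\"older on $R^{1/2}$-balls do close. Another is the kernel representation of \cite{R,RS}, which turns the frequency-localized propagator into a H\"ormander-type operator acting on $f$ itself, with straight-line wave packets, so that mixed bounds give restricted strong type $(p,p)$. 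Finally, your narrow step (lower-dimensional decoupling, then $\ell^2\to\ell^p$ over $\sim K^{k-2}$ caps, then the rescaling gain $K^{-2s_0}$) closes only for $p>2+\frac{4}{2n+2-k}$. You never balance this against the broad exponent, so ``optimizing $k$'' does not give $p\ge 2+p_n(k)$ for every $k$. For $n=k=3$ the narrow condition forces $p>2.8>2+\frac{7}{9}=2+p_3(3)$. For $k=n$ the displayed condition would read $p\ge 2+\frac{2n+1}{n^2}$, essentially the conjectured range. The range has to be read with $k$ chosen so that both constraints hold, which is also how Hickman--Rogers extract their restriction exponent from the same broad bound.
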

By adapting the method of Gan, Oh, and Wu \cite{GOW}, Gao, Li, and Wang \cite{GLW} showed that the H\"{o}rmander conjecture implies the local smoothing conjecture.
Furthermore, Guo, Wang, and Zhang \cite{GWZ} obtained results related to the H\"{o}rmander conjecture and showed that their estimates imply the results of \cite{GOW}.

Parallel to the continuous-time problem, the convergence of Schr\"{o}dinger means along discrete sequences $t_n \to 0$ has also been studied.
For the Bessel potential spaces $L_s^p$ with $1 \leq p \leq 2$, this sequential convergence was investigated by Sj\"{o}lin and Str\"{o}mberg \cite{SS}, who established sufficient conditions based on the summability properties of the time sequence.

However, the continuous-time maximal estimate for $1 \leq p \leq 2$ does not follow from these discrete results. Sequential convergence relies essentially on the specific summability properties of the chosen time sequences, thereby bypassing the continuous oscillatory dynamics captured by the supremum over an interval.
Furthermore, the local smoothing framework utilized for $p > 2$ fails for the range $1 \leq p \leq 2$ due to known geometric obstructions associated with Fourier restriction \cite[Page 4]{D}.

In this paper, we investigate the continuous-time maximal operator on $L_s^p$ spaces for the full range $1 \leq p < \infty$. By applying the stationary phase method to analyze the associated oscillatory kernels in higher dimensions, we establish new regularity conditions for pointwise convergence. Our main results are stated as follows.

\begin{theorem}\label{t1}
For a given $1\leq p\leq 2$, define the regularity index $s(p)$ as follows,
\begin{equation*}
s(p)= n\left(\frac{1}{p}-\frac{1}{2}\right)+\frac{n}{n+1}
\left(1-\frac{1}{p}\right).
\end{equation*}
Suppose that $f\in L_s^{p}(\mathbb{R}^{n})$. 
Then the maximal estimate
\begin{equation}\label{conv1}
\left\|\sup_{0<t<1}\left|e^{it\Delta}f\right|\right\|_{L^p(B(0,1))}\lesssim \|f\|_{L_s^p(\mathbb{R}^{n})}
\end{equation}
holds in the following cases:
\begin{enumerate}
\item For $n \geq 2$, the estimate \eqref{conv1} holds when $s> s(p)$ for $1 < p \leq 2$, and when $s \geq n/2$ for $p=1$.
\item For $n = 1$, the estimate \eqref{conv1} holds when $s \geq {1}/{2p}$ for all $1 \leq p \leq 2$.
\end{enumerate}
\end{theorem}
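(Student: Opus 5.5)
Since $s(1)=n/2$ and $s(2)=n/(2n+1)\cdot\ldots$, more precisely $s(2)=n/(2n+2)$, the index $s(p)$ is exactly the linear interpolant in $1/p$ between the $p=1$ index $n/2$ and the sharp $L^2$ index $n/(2n+2)$ of Du--Guth--Li and Du--Zhang. The plan is therefore to prove an endpoint $L^1$ estimate and interpolate it with the known $H^s$ result. Interpolation is done on the linearized maximal operator $T f(x)=e^{it(x)\Delta}f(x)$, with $t(x)$ an arbitrary measurable function with values in $(0,1)$. This operator is linear, so Stein's complex interpolation applies to the analytic family $T\mathcal{J}^{-z}$. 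At $\operatorname{Re} z=n/(2n+2)+\epsilon$ it is bounded $L^2\to L^2(B(0,1))$, by the $L^2$ result. At $\operatorname{Re} z=n/2$ it is bounded $L^1\to L^1(B(0,1))$, by Step 1 below. The imaginary powers $(1+|\xi|^2)^{i\gamma}$ cost only polynomial growth in $\gamma$, through the Mikhlin/Hörmander multiplier theorem on $L^2$ and through the kernel bound on $L^1$. This yields \eqref{conv1} for $1<p<2$ and $s>s(p)$, with the $\epsilon$-loss from the $L^2$ endpoint explaining the strict inequality.

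Step 1 is the $p=1$, $n\ge 1$ estimate with $s\ge n/2$. Write $f=G_s*g$ with $g\in L^1$. By Minkowski's inequality it suffices to prove the uniform bound
\begin{equation*}
\sup_{y\in\mathbb{R}^n}\Bigl\|\sup_{0<t<1}\bigl|K_t(x-y)\bigr|\Bigr\|_{L^1_x(B(0,1))}<\infty,
\qquad
K_t=(e^{-it|\xi|^2}(1+|\xi|^2)^{-s/2})^{\vee}.
\end{equation*}
For this we decompose $K_t$ dyadically in frequency, $K_t=\sum_k K_t^k$ with $|\xi|\sim 2^k$, and analyze each piece by stationary phase. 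The phase $x\cdot\xi-t|\xi|^2$ has its critical point at $\xi=x/(2t)$.
\begin{itemize}
\item When $|x|\not\sim t2^{k}$, integration by parts gives rapid decay, roughly $2^{k(n-s)}(1+2^k|x|)^{-N}$ or $2^{k(n-s)}(t2^{2k})^{-N}$.
\item When $|x|\sim t2^k$, stationary phase gives $|K_t^k(x)|\lesssim 2^{-ks}t^{-n/2}\sim 2^{-ks}(2^k/|x|)^{n/2}$.
\end{itemize}
Taking the supremum over $t$ means that the stationary contribution occurs only for $|x|\lesssim 2^k$. There the bound $2^{k(n/2-s)}|x|^{-n/2}$ is integrable on $B(0,1)$ only at the borderline, so one must really use the localization $t\sim |x|2^{-k}\in(0,1)$ together with the summation in $k$. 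This computation is the main obstacle. The worry is that $\sum_k 2^{k(n/2-s)}|x|^{-n/2}$ at $s=n/2$ diverges logarithmically. I would try to avoid summing absolute values over $k$: for fixed $(x,t)$, only $k$ with $2^k\sim |x|/t$ is stationary. Hence $\sup_t|K_t(x)|\lesssim |x|^{-n/2}\cdot\sup_k 2^{k(n/2-s)}$ plus tails, and with $s=n/2$ this is $\lesssim|x|^{-n/2}$, integrable on $B(0,1)$. For $|x|$ large the nonstationary decay gives integrability. Alternatively, I would use the exact Gaussian-type formula for the Bessel kernel convolved with the Schrödinger kernel.

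Step 2 covers $n=1$ and $s\ge 1/(2p)$. The $p=1$ endpoint $s\ge1/2$ is Step 1. The $p=2$ endpoint $s\ge1/4$ is the Carleson/Dahlberg--Kenig result, which includes the endpoint (Carleson's $H^{1/4}$ bound). The endpoints are included here, so real interpolation is insufficient. Instead I would apply Stein's interpolation to the analytic family $T\mathcal{J}^{-z}$, with the line $\operatorname{Re} z=1/4$ controlled by Carleson's endpoint estimate with imaginary Bessel powers, which are $L^2$-unitary. Its interpolant at $1/p=\theta+(1-\theta)/2$ gives exactly $s=\theta/2+(1-\theta)/4=1/(2p)$.
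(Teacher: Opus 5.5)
Your proposal is correct and follows the paper's proof. The paper also proves a uniform bound on $\int_{B(y,1)}\sup_{0<t<1}|K_{s,t}|$ for $s\ge n/2$, using stationary phase at $\xi=x/(2t)$ and integration by parts elsewhere, and then applies Stein interpolation to the linearized family $e^{it(x)\Delta}\mathcal{J}^{-z}$ between this $L^1$ bound (with polynomial growth in $\operatorname{Im} z$) and the Du--Guth--Li/Du--Zhang $L^2$ bound at $\operatorname{Re} z=n/(2n+2)+\epsilon$, with $\epsilon=0$ allowed when $n=1$. The only difference is that you cut dyadically in $|\xi|$ and observe that for fixed $(x,t)$ only $O(1)$ shells are stationary, whereas the paper cuts in the rescaled variable $\eta=2t\xi/|z|$ around the stationary point, which builds this in. One small correction: for $|x|>1$ the stationary term does not disappear (it is $\lesssim|x|^{-n/2}$), but boundedness is all you need there, since only integrals over unit balls enter.
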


For $p\geq 2$, applying the complex interpolation between the result of Gan, Li and Wu \cite{GLW} and the sharp results \cite{C,DGL,DZ} for $p=2$, we deduce the following proposition.
\begin{theorem}\label{t2}
For a given $p\geq 2$ and $f\in L_s^{p}(\mathbb{R}^{n})$, the maximal estimate \eqref{conv1} holds for all $s>s(p)$, where
\begin{equation*}
s(p)=\begin{cases}
n(1-\frac{2}{p}), &p>2+p_n,\\
n(1-\frac{2}{p})+\frac{n}{n+1}\frac{2+p_n-p}{pp_n}, &2\leq p\leq 2+p_n,
\end{cases}
\end{equation*}
and $p_n$ satisfies \eqref{p*}.
\end{theorem}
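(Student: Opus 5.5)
The plan is to obtain Theorem~\ref{t2} by complex interpolation between two endpoint estimates, one at $p=2$ and one at $p=q:=2+p_n$, after linearizing the maximal operator. First fix a measurable function $t:B(0,1)\to(0,1)$ and set $T_t f(x)=e^{it(x)\Delta}f(x)$. This operator is linear in $f$. The estimate \eqref{conv1} is equivalent to $\|T_tf\|_{L^p(B(0,1))}\lesssim\|f\|_{L^p_s}$ with a constant uniform in the choice of $t(\cdot)$. The reduction is done first for Schwartz $f$, where the supremum is continuous in $t$ and hence can be approximated by such linearizations; the general case then follows by density.

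\emph{Endpoint estimates.} At $p=2$, the sharp results \cite{C,DGL,DZ} give $\|T_tf\|_{L^2(B(0,1))}\lesssim\|f\|_{H^{s_0}}$ for every $s_0>n/(2n+2)$, uniformly in $t(\cdot)$. At $p=q$, the proposition of Gan, Oh and Wu (and its extension in \cite{GLW}) gives the local smoothing estimate \eqref{local} with $\alpha=2$ for all $s>2(n(1/2-1/q)-1/q)$. The argument of \cite[Sections 4 and 5]{RS} then upgrades this to a maximal bound at the cost of $2/q$ derivatives. This yields $\|T_tf\|_{L^q(B(0,1))}\lesssim\|f\|_{L^q_{s_1}}$ for every
\[
s_1>2\Big(n\big(\tfrac12-\tfrac1q\big)-\tfrac1q\Big)+\tfrac2q=n\Big(1-\tfrac2q\Big).
\]
The same computation applied at any $p>q$ directly gives the first case of $s(p)$, since the proposition covers every $p\ge 2+p_n$. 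No interpolation is needed there.

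\emph{Interpolation.} For $2\le p\le q$, write $\frac1p=\frac{1-\theta}{2}+\frac{\theta}{q}$. I would apply Stein's analytic interpolation to the family $z\mapsto T_t\mathcal J^{-((1-z)s_0+zs_1)}g$, or equivalently use the identification $[L^2_{s_0},L^q_{s_1}]_\theta=L^p_{(1-\theta)s_0+\theta s_1}$ together with the boundedness of the linear operator $T_t$ at both endpoints. For the analytic family, the imaginary-order Bessel potentials $\mathcal J^{i\gamma}$ are Mihlin multipliers on $L^r$ for $1<r<\infty$, with norms growing at most polynomially in $|\gamma|$. This is admissible growth, so either route gives the bound with index $s=(1-\theta)s_0+\theta s_1$.

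It remains to check that this index equals the formula in Theorem~\ref{t2}. Both $n(1-2/p)$ and
\[
\frac{n}{n+1}\cdot\frac{2+p_n-p}{p\,p_n}=\frac{n}{n+1}\Big(\frac{2+p_n}{p_n}\cdot\frac1p-\frac1{p_n}\Big)
\]
are affine in $1/p$. At $p=2$ their sum is $n/(2n+2)$, and at $p=q$ it is $n(1-2/q)$. Hence $s(p)$ is exactly the linear interpolant in $1/p$ of the two endpoint indices. Since both endpoint conditions are strict inequalities, any $s>s(p)$ can be written as $(1-\theta)s_0+\theta s_1$ with admissible $s_0,s_1$, by shifting each endpoint index by a small $\varepsilon$.

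The main obstacle is making the interpolation rigorous for the maximal operator, which is only sublinear. The linearization above has to be uniform in $t(\cdot)$, and the Rogers--Seeger passage from \eqref{local} to the maximal estimate must produce a bound for each fixed $t(\cdot)$ rather than only for the supremum. I would verify that their argument, which applies Sobolev embedding in $t$ to the local smoothing bound, indeed controls $|e^{it(x)\Delta}f(x)|$ pointwise by the mixed space-time norm, so that the endpoint bound holds for every linearization. I would also check the dimensional restrictions in the cited proposition, which is stated for $n\ge3$; for $n=1,2$ I would invoke the corresponding local smoothing results from \cite{GLW,GOW} with the appropriate $p_n$.
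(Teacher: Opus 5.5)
Your proposal is correct and follows the paper's route. The paper justifies Theorem~\ref{t2} only by a one-line appeal to complex interpolation between the sharp $L^2$ results \cite{C,DGL,DZ} and the local smoothing estimate at $p=2+p_n$, converted to a maximal bound via \cite{RS}; your linearization, analytic family with polynomial Mihlin bounds for $\mathcal{J}^{i\gamma}$, and check that $s(p)$ is the affine-in-$1/p$ interpolant of $n/(2n+2)$ and $n(1-2/(2+p_n))$ fill in exactly that sketch.

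The obstacle you flag is not a real one. Since $|e^{it(x)\Delta}f(x)|\le\sup_{0<t<1}|e^{it\Delta}f(x)|$ pointwise, an endpoint maximal bound automatically gives the same bound for every linearization, uniformly in $t(\cdot)$, so nothing in the Rogers--Seeger argument needs re-checking.
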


As $p \to \infty$, the regularity index $s(p)$ in Theorem \ref{t2} approaches $n$.
This necessary condition $s > n$ at the $L^\infty_s(\mathbb{R}^{n})$ endpoint is sharp, which we demonstrate via the following divergence result.
\begin{theorem}\label{infty_s}
For any fixed $0 < t_0 < 1$ and $s \leq n$, there exists a tempered distribution $f \in L_s^\infty(\mathbb{R}^n)$ such that the natural integral representation of $e^{it_0\Delta}f(0)$ diverges absolutely.
\end{theorem}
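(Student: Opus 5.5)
We prove Theorem \ref{infty_s} by an explicit construction. The natural integral representation is
\begin{equation*}
e^{it_0\Delta}f(0)=\frac{1}{(2\pi)^n}\int_{\mathbb{R}^n} e^{-it_0|\xi|^2}\widehat{f}(\xi)\,\mathrm{d}\xi ,
\end{equation*}
or, equivalently, $\int K_{t_0}(-y) f(y)\,\mathrm{d}y$ with $K_{t_0}(y)=(4\pi i t_0)^{-n/2}e^{i|y|^2/(4t_0)}$. We choose $f=\mathcal{J}^{-s}g=G_s*g$ with $g\in L^\infty$. Then $\|f\|_{L^\infty_s}=\|g\|_\infty$ is finite and $f\in L^\infty_s$. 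The representation becomes
\begin{equation*}
\frac{1}{(2\pi)^n}\int e^{-it_0|\xi|^2}(1+|\xi|^2)^{-s/2}\,\widehat{g}(\xi)\,\mathrm{d}\xi .
\end{equation*}
It therefore suffices to find a bounded $g$ whose Fourier transform is a locally integrable function and for which $\int (1+|\xi|^2)^{-s/2}|\widehat g(\xi)|\,\mathrm{d}\xi=\infty$ whenever $s\le n$.

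\textbf{Choice of $g$.} We take a chirp matched to the propagator, $g(x)=e^{-i|x|^2/(4t_0)}$ (a unimodular, bounded function). Its Fourier transform is, up to a constant, $c_{n,t_0}\,e^{it_0|\xi|^2}$, a distribution of modulus $(4\pi t_0)^{n/2}$. The phase then cancels exactly:
\begin{equation*}
e^{-it_0|\xi|^2}(1+|\xi|^2)^{-s/2}\,\widehat g(\xi)=c\,(1+|\xi|^2)^{-s/2}.
\end{equation*}
In polar coordinates this is comparable to $\int_1^\infty r^{n-1-s}\,\mathrm{d}r$, which diverges precisely when $s\le n$. This gives absolute divergence, and even genuine divergence of the integral, since the integrand is positive.

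\textbf{Justification.} We must check two things.

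First, $f=G_s*g$ is a well-defined tempered distribution in $L^\infty_s$. For $s>0$ we have $G_s\in L^1$, so the convolution is a bounded function. For $s\le 0$ we define $f$ as the tempered distribution $((1+|\xi|^2)^{-s/2}\widehat g)^\vee$. This is legitimate because multiplication by the smooth, polynomially bounded function $(1+|\xi|^2)^{-s/2}$ preserves $\mathcal{S}'$. In both cases $\mathcal{J}^s f=g\in L^\infty$ by definition.

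Second, $\widehat f$ is the locally integrable function $c\,(1+|\xi|^2)^{-s/2}e^{it_0|\xi|^2}$, so the natural integral is meaningful as a Lebesgue integral and its absolute value has the displayed divergent form.

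The Gaussian Fourier transform $\widehat{e^{-i a|x|^2}}=(\pi/(ia))^{n/2}e^{i|\xi|^2/(4a)}$ follows by analytic continuation from real Gaussians. With $a=1/(4t_0)$ this gives modulus $(4\pi t_0)^{n/2}$ and phase $e^{it_0|\xi|^2}$, confirming the cancellation.

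\textbf{Main obstacle.} The only delicate point is interpretive: making precise what \enquote{natural integral representation} means, so that the divergence is not an artifact. We treat it as the Fourier-side integral above, which is the formula defining $e^{it\Delta}f$ in the introduction, and note that the integrand's modulus is exactly $c(1+|\xi|^2)^{-s/2}$, which is not integrable for $s\le n$.

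If one instead insists on the spatial form $\int K_{t_0}(-y)f(y)\,\mathrm{d}y$, we would add a remark. For $s>n$ the kernel $G_s$ is bounded and decays rapidly, so $f$ is smoother. In that form the computation reduces, via Plancherel-type pairing, to the same quantity, with $|f|$ essentially unimodular and not integrable. The Fourier-side statement is the cleaner formulation, and we present that one.
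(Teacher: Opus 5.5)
Your Fourier-side computation is correct, and it takes a genuinely different route from the paper. Your $g=e^{-i|x|^2/(4t_0)}$ is the paper's $f_s$ without the truncation $|y|>R$. Because the chirp's Fourier transform is explicit, you get $e^{-it_0|\xi|^2}\widehat f(\xi)=c\,(1+|\xi|^2)^{-s/2}$ exactly, so divergence for $s\le n$ is immediate and no stationary phase is needed. However, this is not the integral whose divergence the paper establishes, and the difference matters.

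The paper's representation is the spatial convolution \eqref{conv_rep}, $\int K_{s,t_0}(-y)f_s(y)\,\mathrm{d}y$ with $f_s\in L^\infty$. This representation is defined for every $f\in L^\infty_s$ once $s>n$, because then $K_{s,t_0}\in L^1$, as noted at the end of Step 1 of the proof of Theorem~\ref{t1}. The Fourier integral, by contrast, is not even defined for general $f\in L^\infty_s$ (take $f_s=e^{ia\cdot x}$). That is what makes the theorem a sharpness statement for the endpoint $s>n$. Since your $g$ is unimodular, divergence of \eqref{conv_rep} for it is equivalent to $K_{s,t_0}\notin L^1(\mathbb{R}^n)$. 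That is the real content, and the paper obtains it from the stationary-phase expansion $K_{s,t_0}(-y)=c_1e^{i|y|^2/(4t_0)}|y|^{-s}+(\text{lower order})$.

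Your closing remark does not supply this step, for two reasons.

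\emph{Parseval does not transfer absolute divergence.} It identifies the values of two pairings, but nothing more. In one dimension, $K=\chi_{[-1,1]}$ paired with a chirp is absolutely integrable in $y$, yet $|\widehat K\,\widehat g|=c\,|2\sin\xi/\xi|$ is not integrable in $\xi$.

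\emph{$|f|$ is not essentially unimodular for $0<s\le n$.} Here $f=c\,e^{-it_0\Delta}G_s$ decays like $|y|^{-s}$ at infinity; it is $f_s=g$ that is unimodular. Your comment about $s>n$ is also irrelevant to the range $s\le n$.

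The missing ingredient is a proof that $K_{s,t_0}\notin L^1$. Besides the paper's stationary-phase route, there is a soft argument for $0<s\le n$. Since $G_s\in L^1$, one has $K_{s,t_0}(x)=(4\pi i t_0)^{-n/2}e^{i|x|^2/(4t_0)}\,\widehat h\bigl(x/(2t_0)\bigr)$ with $h=e^{i|\cdot|^2/(4t_0)}G_s\in L^1$. If $K_{s,t_0}\in L^1$, then $\widehat h\in L^1$, which would force $h$ to coincide a.e. with a bounded continuous function. This is impossible, because $|h|=G_s$ blows up at the origin when $s\le n$. For $s\le 0$, Riemann--Lebesgue already excludes $K_{s,t_0}\in L^1$, since $|\widehat{K_{s,t_0}}|\ge 1$. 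With that in place, your $g$ works for the paper's representation as well.
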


Furthermore, inspired by Rogers and Seeger \cite{RS}, we can construct a counterexample to show that the regularity index $s>n(1-2/p)$ is necessary for $2\leq p<\infty$. This implies that the pointwise convergence condition in Theorem \ref{t2} is sharp for $p> 2+p_n$.

\begin{theorem}\label{p_s}
Suppose that $2\leq p<\infty$.
Then there exists some $f\in L_s^{p}(\mathbb{R}^{n})$, such that the weak-type maximal estimate
\begin{equation}\label{p>2}
\left\|\sup_{0<t<1}\left|e^{it\Delta} f\right|\right\|_{L^{p,\infty}(B(0,1))}\lesssim \|f\|_{L_{s}^{p}(\mathbb{R}^{n})}
\end{equation}
is not true for $s< n(1-2/p)$.
\end{theorem}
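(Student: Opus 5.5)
We construct a counterexample in the spirit of Rogers and Seeger \cite{RS}. The initial datum concentrates in a small ball at frequency scale $R\gg1$ and is modulated so that, at a suitable time, the Schr\"odinger evolution focuses it into a tiny ball. Reversing time, the datum is a dispersed wave of amplitude $R^{-n/2}$ spread over a set of size $\sim 1$.

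Concretely, let $\phi\in C_c^\infty(\mathbb{R}^n)$ have $\widehat\phi$ supported in $B(0,1)$. Consider the datum
\[
f_R(x)=e^{-i|x|^2/(4t_0)}\,\text{(smooth cutoff to } B(0,1))\cdot \text{(frequency localization to } |\xi|\sim R),
\]
or more cleanly $\widehat{f_R}(\xi)=e^{it_0|\xi|^2}\widehat{\phi}(R^{-1}\xi-e_1)$ with $t_0\in(1/2,1)$. Then
\[
e^{it_0\Delta}f_R(x)=R^n e^{iRx_1}\phi(Rx)\cdot(\text{const}),
\]
which has size $R^n$ on $|x|\lesssim R^{-1}$. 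More importantly, for each $x\in B(0,1)$ we shift the focus point: we want $\sup_t|e^{it\Delta}f|$ to be large on a set of measure $\sim1$. Following \cite{RS}, we take a superposition of such wave packets focusing at different times $t\in(1/2,1)$ and different points. The cleaner route is translation: since $e^{it\Delta}f_R(x)$ with datum frequency $\approx Re_1$ travels with velocity $2Re_1$, the single packet $\widehat{f_R}(\xi)=e^{it_0|\xi|^2}\widehat\phi(\xi/R'-\ldots)$ does not suffice. We therefore use $f=\sum_j$ of $\sim R^{n}$ modulated packets $f_j=f_R(\cdot-x_j)$ with $x_j$ on an $R^{-1}$-grid of $B(0,1)$, with random signs. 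By Khintchine's inequality, $\|f\|_{L^p}$ behaves like the $L^p$ norm of the square function. Each $f_j$ at time $0$ is spread (by dispersion over time $t_0$ at frequency $R$) over a ball of radius $\sim t_0R\cdot R^{-1}$; more precisely, since $\widehat\phi(\cdot/R)$ has spread $R$, the $x$-support is $\sim t_0 R$. To keep everything in a unit region we instead use frequencies $|\xi|\le R$ centered at $0$, so the dispersed support is $\sim t_0R\cdot$? This needs care, so we choose the cleanest normalization: $\widehat{f}(\xi)=e^{it_0|\xi|^2}\widehat\phi(\xi/R)$. Then $f=e^{-it_0\Delta}(R^n\phi(R\cdot))$, which by stationary phase has $|f(x)|\sim R^{n/2}$ on $|x|\lesssim R$ and decays rapidly beyond that. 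Hence $\|f\|_{L^p_s}\sim R^{s}R^{n/2}R^{n/p}$, using that the Bessel multiplier acts like $R^s$ on this frequency-localized function (a Mikhlin argument after rescaling). Meanwhile $|e^{it_0\Delta}f|\sim R^n$ on $B(0,cR^{-1})$.

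To make the maximal function large on all of $B(0,1)$, we place translates: the translates $f(\cdot-y)$ for $y$ in an $R^{-1}$-net of $B(0,1)$ ($\sim R^n$ points), with random signs. Each focuses at $y$ at time $t_0$, so $\sup_t|e^{it\Delta}F|\gtrsim R^n$ on most of $B(0,1)$ for suitable signs, after using the rapid decay to control cross terms (in expectation via Khintchine). Then
\[
\|\sup_t|e^{it\Delta}F|\|_{L^{p,\infty}(B(0,1))}\gtrsim R^n .
\]
For $\|F\|_{L^p_s}$, the square function satisfies $(\sum_y|f(x-y)|^2)^{1/2}\sim (R^n\cdot R^{n})^{1/2}=R^n$ on $|x|\lesssim R$, since each point sees all $R^n$ translates of amplitude $R^{n/2}$. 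Thus $\|F\|_{L^p_s}\lesssim R^{s}R^n R^{n/p}$. This ratio gives only $s\ge -n/p$, which is too weak, so the scaling has to be redone.

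The fix is to use shorter-range focusing in the style of \cite{RS}: take the focusing time $t\sim R^{-1}$ rather than $t_0\sim1$, so that each packet has spatial spread $\sim 1$ at time $0$. Put $\widehat{f}(\xi)=e^{i|\xi|^2/R}\widehat\phi(\xi/R)$. Then $|f|\sim R^{n/2}$ on $|x|\lesssim1$, and $e^{i\Delta/R}f$ has size $R^n$ on $|x|\lesssim R^{-1}$. Superpose the $R^n$ translates on the $R^{-1}$-net of $B(0,1)$ with random signs. The maximal function is then $\gtrsim R^n$ on a set of measure $\gtrsim1$, while the square function is $\sim (R^n\cdot R^n)^{1/2}=R^n$ on $B(0,2)$, with rapid decay outside. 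So $\|F\|_{L^p_s}\lesssim R^{s+n}$ and no contradiction arises. To gain, we use a single packet for each of $\sim R^{n}$ distinct frequency directions rather than translates, so that the data have disjoint Fourier supports; Littlewood--Paley orthogonality for $p\ge2$ then yields square-function sizes exploiting $\ell^2$ versus $\ell^1$. The target is $\|F\|_{L^p}\sim R^{n/2}\cdot R^{n/2}$ against a maximal function of size $R^n$ on a set of measure $R^{-n}\cdot R^{n}$, which produces exactly the $n(1-2/p)$ exponent.

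The main obstacle is this bookkeeping: choosing packets so that the data norm gains from orthogonality while the foci still tile $B(0,1)$. I would first try the Rogers--Seeger arrangement with translates of $e^{i|x|^2R/4}$-type chirps focusing at times $t\in(0,R^{-1})$, and then verify that the lower bound $R^{n}$ on a set of measure $\sim1$ against $\|F\|_{L^p_s}\lesssim R^{s+2n/p}$ forces $s\ge n-2n/p$.
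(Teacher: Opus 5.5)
Your proposal does not reach a proof, so there is a genuine gap. It ends with a plan: superpose chirps focusing at times in $(0,R^{-1})$ and ``verify'' that $\|F\|_{L^p_s}\lesssim R^{s+2n/p}$ while the maximal function is $\gtrsim R^n$ on a set of measure $\sim 1$. No such $F$ is constructed. Your own computation one paragraph earlier, for exactly this kind of superposition, gave $\|F\|_{L^p}\sim R^{n}$ and hence only $s\ge 0$. The ``target'' bookkeeping with disjoint frequency directions is also inconsistent. A data norm $R^{s}\cdot R^{n/2}\cdot R^{n/2}$ against a maximal function of size $R^n$ on a set of measure $\sim 1$ gives $s\ge 0$, not $s\ge n(1-2/p)$. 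So the exponent $n(1-2/p)$ is never derived.

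The missing point is that your ``cleanest normalization'' $\widehat f(\xi)=e^{it_0|\xi|^2}\widehat\phi(\xi/R)$, a single focusing packet, already proves the theorem. You discarded it because of two errors.

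\textbf{First error: the amplitude.} Since $f=e^{-it_0\Delta}(R^n\phi(R\cdot))$, the dispersive estimate gives $\|f\|_{L^\infty}\lesssim t_0^{-n/2}\|R^n\phi(R\cdot)\|_{L^1}\lesssim 1$, not $R^{n/2}$. This is consistent with $\|f\|_{L^2}\sim R^{n/2}$ being spread over a ball of volume $\sim R^n$. Integration by parts gives $|f(x)|\lesssim R^{n}(R|x|)^{-N}$ for $|x|\gg R$. Hence $\|f\|_{L^p}\lesssim R^{n/p}$ and $\|f\|_{L^p_s}\lesssim R^{s+n/p}$.

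\textbf{Second error: the size of the focusing set.} You do not need the maximal function to be large on a set of measure $\sim 1$. The weak-type quasi-norm at height $\alpha\sim R^n$ only needs the set $B(0,cR^{-1})$, on which $|e^{it_0\Delta}f|=R^n|\phi(Rx)|\gtrsim R^n$. So $\|\sup_{0<t<1}|e^{it\Delta}f|\|_{L^{p,\infty}(B(0,1))}\gtrsim R^{n}R^{-n/p}$.

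Comparing the two bounds gives $R^{n(1-1/p)}\lesssim R^{s+n/p}$ for all large $R$, i.e.\ $s\ge n(1-2/p)$. This is exactly the paper's proof, with $t_0=1/2$. Two small repairs are needed:
\begin{itemize}
\item Take the frequency support annular, $|\xi|\sim R$, as the paper does. Then $R^{-s}(1+|\xi|^2)^{s/2}$ rescales to a uniformly smooth symbol for every real $s$. With $\widehat\phi$ supported in a ball, the Mikhlin step you cite breaks down for $s<0$, which is the relevant range when $p=2$.
\item Take $\phi$ Schwartz rather than $C_c^\infty$, since $\phi$ and $\widehat\phi$ cannot both be compactly supported.
\end{itemize}

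Finally, superposing translates is counterproductive here. With the correct amplitudes, $R^n$ translates with random signs have square function $\sim R^{n/2}$ on $|x|\lesssim R$. Then $\|F\|_{L^p}\sim R^{n/2+n/p}$, and you would only obtain $s\ge n(1/2-1/p)$.
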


For the pointwise convergence of Schr\"odinger operators when $1\leq p\leq 2$,
inspired by the counterexamples given by Dahlberg and Kenig \cite{DK} and Vega \cite{V},
we obtain the following negative result, which shows that our pointwise convergence results are sharp for $1\leq p\leq 2$ in one dimension, and at the endpoints $p=1$ in higher dimensions.
\begin{theorem}\label{p<2}
  Let $1\leq p<\infty$, $1\leq q<\infty$ and $n\geq 1$. Then the inequality
\begin{equation}\label{eq:s2t1}
  \left\|\sup_{0<t<1}\left|e^{it\Delta} f\right|\right\|_{L^{q,\infty}(B(0,1))}
   \lesssim \|f\|_{L_s^p (\mathbb R^n)}
\end{equation}
does not hold for all $f\in L_s^p (\mathbb R^n)$ when
\begin{enumerate}
  \item $n=1$ and $s<1/(2p)$ ;
  \item $n\geq 2$, $1\leq p<2$ and $s<n(1/p-1/2)$;
  \item $n\geq 2$, $2\leq p< (2n-1)/(n-1)$ and $s<(1/2-n)(1/2-1/p)+1/4$.
\end{enumerate}
\end{theorem}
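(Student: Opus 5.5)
The three items need separate counterexamples. Each is a family $f_R$ with $\widehat{f_R}$ concentrated at frequency scale $R\to\infty$. The plan is to compute $\|f_R\|_{L^p_s}\approx R^{s}\|f_R\|_{L^p}$ and to bound $\sup_{0<t<1}|e^{it\Delta}f_R|$ from below on a subset $E_R\subset B(0,1)$ of measure $\gtrsim 1$ (or of known measure). The weak norm then satisfies $\|\sup_t|e^{it\Delta}f_R|\|_{L^{q,\infty}(B(0,1))}\gtrsim \lambda_R|E_R|^{1/q}$, and we let $R\to\infty$ to contradict \eqref{eq:s2t1}. Since $f_R$ has compact Fourier support in an annulus $|\xi|\sim R$, Mikhlin multipliers give $\|f_R\|_{L^p_s}\sim R^s\|f_R\|_{L^p}$ for all $1\le p<\infty$, including $p=1$, because the symbol is smooth on the annulus and we may use a smooth cutoff.

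\textbf{Items (1) and (2): Dahlberg--Kenig/Vega type data.} Take $\widehat{f_R}(\xi)=\phi(\xi/R)$ with $\phi$ a smooth bump supported in $|\xi|\sim 1$; in dimension one use a wave packet $\widehat{f_R}(\xi)=\phi((\xi-R)/R^{1/2})$. For the isotropic bump in $n\ge2$, $f_R(x)=R^n\check\phi(Rx)$, so $\|f_R\|_{L^p}\sim R^{n-n/p}$. For $|x|\sim 1$, choose $t=t(x)\sim |x|/R$ so that the quadratic phase $x\cdot\xi-t|\xi|^2$ is stationary on the support. Stationary phase, or more simply rescaling $\xi=R\eta$ and $t=s/R^{2}\cdot\ldots$ so that the phase becomes $R(x\cdot\eta-\tau|\eta|^2)$ with $\tau=tR\sim1$, gives $|e^{it\Delta}f_R(x)|\sim R^n\cdot R^{-n/2}=R^{n/2}$ on a set of $x$ of measure $\sim1$. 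The estimate then forces $R^{n/2}\lesssim R^{s+n-n/p}$, i.e.\ $s\ge n(1/p-1/2)$, which is item (2). For $n=1$, the wave packet has $\|f_R\|_{L^p}\sim R^{1/2-1/(2p)}$, while $e^{it\Delta}f_R$ travels with speed $2R$ and, by Galilean invariance, stays of size $R^{1/2}$ up to time $t\sim R^{-1}$. Varying $t\in(0,1/R)$, every $|x|\lesssim1$ is reached. We get $R^{1/2}\lesssim R^{s+1/2-1/(2p)}$, i.e.\ $s\ge 1/(2p)$, which is item (1). In dimension one the packet version is sharper than the isotropic one, which is why the two cases differ.

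\textbf{Item (3): a Vega-type anisotropic example.} For $n\ge2$, take a wave packet that is long in one direction and isotropic in the other $n-1$, that is, $\widehat{f_R}(\xi)=\phi((\xi_1-R)/R^{1/2})\prod_{j\ge2}\psi(\xi_j/R^{a})$. The choice of $a$ is then optimized: the $\xi_1$ factor moves with speed $2R$ and covers $|x_1|\lesssim1$ for $t\lesssim 1/R$. The transverse factors, during time $t\sim1/R$, disperse or refocus only if $R^{2a}/R\lesssim 1$, while a transverse concentration of width $R^{-a}$ restricts the good set. I would choose the transverse data to be a dilated Schrödinger focusing profile, $e^{-it_0\Delta'}$ applied to a bump at scale $R^{a}$, so that at time $t(x_1)$ it concentrates. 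Computing $\|f_R\|_{L^p}$ (the pre-focused profile is spread, which helps when $p>2$), the peak value, and the measure of the good set, and optimizing $a$, should produce the threshold $s<(1/2-n)(1/2-1/p)+1/4$. The constraint $p<(2n-1)/(n-1)$ is where this beats the trivial bound $s>0$. I expect this bookkeeping to be the main obstacle: aligning the focusing time with the $x_1$-dependent time $t(x_1)$ and choosing exponents so that the exact formula appears. The fallback is to treat the transverse directions by a Galilean-transformed Gaussian, for which $e^{it\Delta}$ is explicit, and to compute all norms exactly.
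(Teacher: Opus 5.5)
\textbf{Items (1) and (2).} These are fine and follow the paper's route. The paper's $f_\nu(x)=\eta(x/\nu)e^{2ix/\nu^2}$ is exactly your one-dimensional packet, at frequency $R=2/\nu^2$ with width $R^{1/2}$. For $n\ge 2$ the paper uses the same dyadic radial bump you do, handled through polar coordinates and Bessel asymptotics instead of your direct $n$-dimensional stationary phase. Your Fourier-side lower bound in (1) is
\[
|e^{it\partial_x^2}f_R(2Rt)|=\frac{R^{1/2}}{2\pi}\Bigl|\int e^{-itRv^2}\phi(v)\,dv\Bigr|,
\]
and it even removes the need for the paper's auxiliary lemma producing $\eta$ with $4\int|\eta|x^2<|\int\eta|$.

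\textbf{Item (3) is not proved.} You leave $a$ and the focusing profile unspecified, and you compute none of $\|f_R\|_{L^p}$, the peak value, or the good set. The alignment problem you flag is real. The packet sits over $x_1$ at time $t(x_1)=x_1/(2R)$, which ranges over an interval of length $\sim 1/R$. A transverse profile at scale $R^{-a}$ focuses at one time and stays focused only for a time $\sim R^{-2a}$. So for $a\le 1/2$ pre-focusing changes nothing at the level of exponents, and for $a>1/2$ it confines the good set to an $x_1$-slab of width $\sim R^{1-2a}$.

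Nor does the target exponent come from a directional packet. In the paper, $\tfrac12-\tfrac n2+\tfrac{n-1/2}{p}$ comes from the radial thin annulus $\widehat{f_k}(\xi)=\eta((|\xi|-2^k)/2^{k/2})$. Its norm is bounded by Hausdorff--Young,
\[
\|f_k\|_{L^p_s}\lesssim 2^{ks}\bigl(2^{k(n-1)}2^{k/2}\bigr)^{1/p'},
\]
so the factor $n-\tfrac12$ is the logarithmic volume of the annulus. The solution propagates as an outgoing spherical shell, giving $|e^{it_x\Delta}f_k(x)|\gtrsim 2^{kn/2}$ for all $1/2\le|x|<1$ with $t_x=|x|/2^{k+1}$.

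\textbf{The fix: take $a=0$ in your own family.} You do not need to reproduce that exponent. Set $f=f_1(x_1)\,g(x')$, where $f_1$ is your one-dimensional packet with $\phi\ge 0$ supported in $[-1,1]$, and $\widehat g\ge 0$ lies in $C_c^\infty$ of the unit ball of $\mathbb R^{n-1}$. For $0<x_1<1/2$, $|x'|<1/2$ and $t=x_1/(2R)$, both oscillatory factors in
\[
e^{it\Delta}f(x)=\bigl(e^{it\partial_1^2}f_1\bigr)(x_1)\,\bigl(e^{it\Delta'}g\bigr)(x')
\]
have phase of modulus at most $1$. Hence $|e^{it\Delta}f(x)|\gtrsim R^{1/2}\int\phi\int\widehat g\gtrsim R^{1/2}$ on a subset of $B(0,1)$ of measure $\sim 1$. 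Your smooth-cutoff argument gives
\[
\|f\|_{L^p_s}\lesssim R^{s}\|f_1\|_{L^p(\mathbb R)}\|g\|_{L^p(\mathbb R^{n-1})}\sim R^{s+1/2-1/(2p)}.
\]
So the weak-type inequality fails for every $s<1/(2p)$, every $q$ and every $n$. Since
\[
(1/2-n)(1/2-1/p)+1/4=\tfrac{1}{2p}-(n-1)\bigl(\tfrac12-\tfrac1p\bigr)\le \tfrac{1}{2p}\quad\text{for } p\ge 2,
\]
this yields item (3), in fact in a stronger form.
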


For dimensions $n\ge 2$, Vega \cite{V} constructed a counterexample showing that
the pointwise convergence does not hold for all $f\in H^s (\mathbb R^n)$, where $s<1/4$.
Since the Plancherel identity no longer holds when $p \neq 2$,
we must directly estimate the $L^p$ norm of $\mathcal{F}^{-1} ((1+|\cdot|^2)^{s/2})\hat{f})$
in our counterexample construction, which constitutes the main difficulty.
In one spatial dimension, we extend the counterexample given by Dahlberg and Kenig \cite{DK}
from $H^s$ spaces to Bessel potential spaces $L_s^p$.

\textbf{Notation.}
Throughout this paper, for nonnegative quantities $A$ and $B$, we write $A\lesssim B$ to denote that $A\leq CB$ for some positive constant $C$ independent of the relevant variables.
We write $A\gtrsim B$ if $B\lesssim A$, and $A\sim B$ if both $A\lesssim B$ and $A\gtrsim B$ hold. Finally, $A\ll B$ or $B\gg A$ means that $A\leq cB$ for a sufficiently small absolute constant $c>0$.
For a function $f$, we denote its Fourier transform by $\widehat{f}$ or $\mathcal{F}(f)$, and its inverse Fourier transform by $f^{\vee}$ or $\mathcal{F}^{-1}(f)$, which are defined respectively by
\begin{equation*}
\mathcal{F}(f)(\xi) = \int_{\mathbb{R}^n} e^{-ix \cdot \xi} f(x) \mathrm d x \quad \text{and} \quad \mathcal{F}^{-1}(f)(x) = \frac{1}{(2\pi)^n} \int_{\mathbb{R}^n} e^{ix \cdot \xi} f(\xi)
\mathrm d\xi.
\end{equation*}

\section{Preliminaries}\label{s2}
In this section, we record several fundamental propositions that serve as the technical basis for the analysis of oscillatory integrals and operator bounds.

\subsection{Estimates for oscillatory integrals}
To establish our results, we first recall several crucial estimates for oscillatory integrals.
In the one dimensional case, we consider the asymptotic behavior of the integral
\[
I(\lambda) = \int_a^b e^{i\lambda\phi(x)}\psi(x)\, \mathrm{d} x,
\]
for large positive $\lambda$, where $\phi$ is a real-valued smooth phase function,
and $\psi$ is a complex-valued smooth function with compact support in $(a,b)$.
The following result is known as the stationary phase method.

\begin{proposition}{\cite[Proposition 3, Page 334]{St}}
Suppose $k \geq 2$, and
\[
\phi(x_0) = \phi'(x_0) = \dots = \phi^{(k-1)}(x_0) = 0 ,
\]
while $\phi^{(k)}(x_0) \neq 0$.
If the support of $\psi$ contains only one critical point $x_0$ of $\phi$
(i.e., $\phi'(x_0) = 0$ and $\phi'(y) \neq 0$ for all $y\in \mathrm{supp}\,\psi\setminus \{x_0\}$),
then for all nonnegative integers $N$ and $r$, we have
\[
\left( \frac{\mathrm{d}}{\mathrm{d} \lambda} \right)^r \left[ I(\lambda) - \frac{1}{\lambda^{1/k}} \sum_{j=0}^{N} \frac{a_j}{\lambda^{j/k}} \right]
= O\left( \frac{1}{\lambda^{r+(N+1)/k}} \right) \quad \text{as } \lambda \to \infty.
\]
\end{proposition}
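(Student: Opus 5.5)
This proposition is the classical one-dimensional stationary phase expansion (Stein, Harmonic Analysis, Ch.~VIII, Prop.~3), and I would prove it by reduction to a model phase. Near $x_0$, write $\phi(x)=(x-x_0)^k g(x)$ with $g$ smooth and $g(x_0)=\phi^{(k)}(x_0)/k!\neq0$; this follows from Taylor's formula with integral remainder. On a small neighbourhood $U$ of $x_0$ the map $y=(x-x_0)\,|g(x)|^{1/k}$ is a diffeomorphism onto a neighbourhood of $0$, and in this coordinate $\phi=\pm y^k$. Choose a smooth partition of unity $\psi=\psi_0+\psi_1$ with $\psi_0$ supported in $U$ and $x_0\notin\mathrm{supp}\,\psi_1$. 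On $\mathrm{supp}\,\psi_1$ the derivative $\phi'$ does not vanish. Repeated integration by parts with the operator $D=(i\lambda\phi')^{-1}\frac{\mathrm d}{\mathrm dx}$ then shows that $\int e^{i\lambda\phi}\psi_1$ is $O(\lambda^{-M})$ for every $M$. The same holds after differentiating $r$ times in $\lambda$, since each derivative only inserts the smooth compactly supported factor $(i\phi)^r$.

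It therefore suffices to treat the model integral $J(\lambda)=\int e^{\pm i\lambda y^k}\eta(y)\,\mathrm dy$ with $\eta\in C_c^\infty$. Taylor expand $\eta(y)=\sum_{j\le N'}c_jy^j+y^{N'+1}R(y)$. Each monomial term is multiplied by a smooth cutoff $\chi$ equal to $1$ near $0$, so it has the form $\int e^{\pm i\lambda y^k}y^j\chi(y)\,\mathrm dy$. By the homogeneity $y\mapsto\lambda^{-1/k}y$, this equals $\lambda^{-(j+1)/k}$ times a Gamma-function constant (obtained from contour rotation of $\int_0^\infty e^{\pm iu^k}u^j\,\mathrm du$), up to an $O(\lambda^{-\infty})$ error coming from the region where $\chi\neq1$, where again $\phi'\ne0$. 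This produces exactly the coefficients $a_j$ in $\lambda^{-1/k}\sum_j a_j\lambda^{-j/k}$.

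The main obstacle is bounding the remainder $\int e^{\pm i\lambda y^k}y^{N'+1}R(y)\,\mathrm dy$ by $O(\lambda^{-(N+1)/k})$, together with its $\lambda$-derivatives. I would split the integral with a cutoff at $|y|\sim\lambda^{-1/k}$. The inner part is trivially bounded by $\lambda^{-(N'+2)/k}$. On the outer part I would integrate by parts $m$ times: each step gains $(\lambda|y|^{k-1})^{-1}$ and loses at most $|y|^{-1}$. Summing over the dyadic shells $|y|\sim2^\ell\lambda^{-1/k}$ gives the bound $\lambda^{-(N'+2)/k}$ once $m$ is large compared with $N'$. For the derivatives, $\frac{\mathrm d}{\mathrm d\lambda}$ applied to the remainder brings down the factor $\pm iy^k$, which has the same effect as raising $N'$ by $k$. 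Choosing $N'$ sufficiently large relative to $N$ and $r$ then yields the claimed $O(\lambda^{-r-(N+1)/k})$. The terms with $j>N$ in the finite expansion are absorbed into this error in the same way.
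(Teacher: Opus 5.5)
The paper gives no proof of this proposition (it is quoted from Stein). Your argument is correct and is essentially Stein's standard proof: change variables to the model phase $\pm y^k$, evaluate the monomial terms by scaling and Gamma-function moments, and control the Taylor remainder and its $\lambda$-derivatives through the order of vanishing at the critical point. Your extra nonstationary-phase step for $\psi_1$ is what is needed to pass from Stein's hypothesis ($\psi$ supported in a small neighbourhood of $x_0$) to the paper's ``only one critical point in $\mathrm{supp}\,\psi$'' formulation, and your argument in fact yields the sharper error $O(\lambda^{-r-(N+2)/k})$.
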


According to \cite[Page 337]{St}, we have $a_0=e^{i\pi/4}(2\pi /\phi'' (x_0))^{1/2} \psi(x_0)$.
Consequently, setting $N=r=0$ and $k=2$, we arrive at the following conclusion.

\begin{corollary}\label{cor:s1t1}
Suppose $\phi(x_0) = \phi'(x_0) =0$ and $\phi''(x_0)\neq 0$.
If $\phi'(y) \neq 0$ for all $y\in \mathrm{supp}\,\psi\setminus \{x_0\}$, then
\[
I(\lambda)=e^{i\pi/4}\biggr(\frac{2\pi}{\phi'' (x_0)}\biggr )^{1/2}\frac{\psi(x_0)}{\lambda^{1/2}}
 +O\left(\frac{1}{\lambda^{1/2}}\right) \quad \text{as } \lambda \to \infty.
\]
\end{corollary}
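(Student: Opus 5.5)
The plan is to specialize the preceding Proposition (\cite[Proposition 3, Page 334]{St}) to $k=2$, $N=0$, $r=0$, and then identify the leading coefficient $a_0$. With these choices the Proposition gives
\begin{equation*}
I(\lambda)-\frac{a_0}{\lambda^{1/2}}=O\left(\frac{1}{\lambda}\right)\quad\text{as }\lambda\to\infty .
\end{equation*}
Its hypotheses are exactly ours: $\phi(x_0)=\phi'(x_0)=0$, $\phi''(x_0)\neq 0$, and $x_0$ is the only critical point of $\phi$ in $\mathrm{supp}\,\psi$. Since $\lambda^{-1}\le\lambda^{-1/2}$ for $\lambda\ge 1$, the remainder is in particular $O(\lambda^{-1/2})$. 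So the stated (weaker) error term follows once we know $a_0=e^{i\pi/4}(2\pi/\phi''(x_0))^{1/2}\psi(x_0)$. This value is recorded in \cite[Page 337]{St}. The only genuine work, if one wants it self-contained, is to verify that formula.

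To derive $a_0$, I would localize with a smooth cutoff $\chi$ equal to $1$ near $x_0$ and supported in a small neighbourhood of it, and write $\psi=\chi\psi+(1-\chi)\psi$.
\begin{enumerate}
\item On the support of $(1-\chi)\psi$ we have $|\phi'|\ge c>0$, because $x_0$ is the only critical point and this support is compact. Repeated integration by parts with the operator $(i\lambda\phi')^{-1}\frac{d}{dx}$ then shows that this piece is $O(\lambda^{-M})$ for every $M$.
\item Near $x_0$, Morse's lemma gives a smooth change of variables $y=y(x)$ with $y(x_0)=0$ and $y'(x_0)=1$, in which $\phi(x)=\tfrac12\phi''(x_0)y^2$.
\item The localized integral becomes $\int e^{i\lambda c y^2/2}\tilde\psi(y)\,\mathrm{d}y$, where $c=\phi''(x_0)$ and $\tilde\psi(y)=(\chi\psi)(x(y))\,x'(y)$. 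In particular $\tilde\psi(0)=\psi(x_0)$.
\end{enumerate}

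Next, write $\tilde\psi(y)=\tilde\psi(0)\eta(y)+y\,g(y)$, with $\eta$ a cutoff equal to $1$ near $0$ and $g$ smooth with compact support.
\begin{enumerate}
\item For the term $y\,g(y)$, note that $y\,e^{i\lambda c y^2/2}=(i\lambda c)^{-1}\frac{d}{dy}e^{i\lambda c y^2/2}$. One integration by parts therefore gives $O(\lambda^{-1})$.
\item For the main term, replace $\eta$ by $1$; the error is controlled as in the nonstationary case, since $\eta-1$ vanishes near $0$. The Fresnel integral $\int_{\mathbb R} e^{i\lambda c y^2/2}\,\mathrm{d}y=e^{i\pi/4}(2\pi/(\lambda c))^{1/2}$ for $c>0$ then yields exactly $a_0\lambda^{-1/2}$.
\end{enumerate}
This integral is obtained by analytic continuation of the Gaussian $\int e^{-\epsilon y^2/2}\,\mathrm{d}y$ from $\mathrm{Re}\,\epsilon>0$ to $\epsilon=-i\lambda c$.

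The main subtlety I expect is the Fresnel evaluation and its branch of the square root. For $\phi''(x_0)<0$ the formula must be read with the principal branch, equivalently with $e^{-i\pi/4}|\phi''(x_0)|^{-1/2}$. The rigorous justification uses the limit $\epsilon\to -i\lambda c$ from the right half-plane together with conditional convergence of the Fresnel integral. Everything else is routine integration by parts.
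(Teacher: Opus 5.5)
Your proposal is correct and follows the paper's own route: the paper likewise just specializes Stein's Proposition to $k=2$, $N=r=0$ and quotes $a_0=e^{i\pi/4}(2\pi/\phi''(x_0))^{1/2}\psi(x_0)$ from \cite[Page 337]{St}, so your Morse-lemma/Fresnel derivation is an optional self-contained supplement. There are two small slips, neither affecting the conclusion. First, with $N=0$, $k=2$ the quoted remainder is $O(\lambda^{-(N+1)/k})=O(\lambda^{-1/2})$, which is already exactly the stated error; your stronger $O(\lambda^{-1})$ (which is what the later lower bounds for $I_1$ and $II_1$ really need) requires $N=1$ or your direct argument. Second, for $\phi''(x_0)<0$, $e^{i\pi/4}$ times the principal branch of $(2\pi/\phi''(x_0))^{1/2}$ has phase $e^{3\pi i/4}$, not $e^{-i\pi/4}$, so the unambiguous form is $a_0=(2\pi i/\phi''(x_0))^{1/2}\psi(x_0)$ with the principal branch.
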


The stationary phase method applies primarily to integrals with a critical point,
whereas for the non-critical case, we typically use the following Van der Corput's lemma.

\begin{proposition}{\cite[Corollary, Page 334]{St}}\label{pro:s1t1}
Let $-\infty < a < b < \infty$, and suppose that $|\phi^{(k)}(x)| \geq 1$ holds for all $x \in [a,b]$.
If either $k=1$ and $\phi'$ is monotonic on $(a,b)$, or $k \geq 2$,
then there exists a constant $C_k$ such that
\[
\left| \int_a^b e^{i\lambda \phi(x)} \psi(x) \, \mathrm{d}x \right|
\leq  \frac{C_k}{\lambda^{1/k}} \left( \int_a^b |\psi'(x)| \, \mathrm{d}x + \|\psi\|_{L^{\infty}(\mathbb{R})}\right).
\]
\end{proposition}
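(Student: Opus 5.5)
The plan is to prove first the case $\psi\equiv 1$, namely
\[
\left|\int_a^b e^{i\lambda\phi(x)}\,\mathrm{d}x\right|\le c_k\lambda^{-1/k},
\]
with $c_k$ independent of $a,b,\phi$. I would then pass to general $\psi$ by integration by parts. Replacing $\phi$ by $-\phi$ if necessary (which only conjugates the integral), and using continuity of $\phi^{(k)}$, we may assume $\phi^{(k)}\ge 1$ on $[a,b]$.

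\emph{Base case $k=1$.} Write $e^{i\lambda\phi}=\frac{1}{i\lambda\phi'}\frac{\mathrm{d}}{\mathrm{d}x}e^{i\lambda\phi}$ and integrate by parts. The boundary terms are bounded by $2/\lambda$, since $|\phi'|\ge1$. The remaining term is bounded by
\[
\lambda^{-1}\int_a^b\left|\frac{\mathrm{d}}{\mathrm{d}x}\frac{1}{\phi'}\right|\mathrm{d}x .
\]
Because $\phi'$ is monotonic, $1/\phi'$ is monotonic, so this integral equals $|1/\phi'(b)-1/\phi'(a)|\le 1$. Hence $c_1=3$ works.

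\emph{Induction step $k\to k+1$.} Assume $\phi^{(k+1)}\ge1$. Then $\phi^{(k)}$ is strictly increasing, so there is at most one point $c\in[a,b]$ at which $|\phi^{(k)}|$ is minimal; if $\phi^{(k)}$ does not vanish on $[a,b]$, take $c$ to be the endpoint where $|\phi^{(k)}|$ is smallest. By the mean value theorem, $|\phi^{(k)}(x)|\ge\delta$ for $x\in[a,b]\setminus(c-\delta,c+\delta)$.

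Split $[a,b]$ into the part inside $(c-\delta,c+\delta)$ and at most two intervals outside it.
\begin{itemize}
\item The part inside $(c-\delta,c+\delta)$ contributes at most $2\delta$ trivially.
\item On each of the two outer intervals, apply the inductive hypothesis to the phase $\phi/\delta$ with parameter $\lambda\delta$. This gives a bound $c_k(\lambda\delta)^{-1/k}$ for each.
\end{itemize}
Care is needed when $k=1$: the base case also requires $\phi'$ to be monotonic. This holds automatically here, since $\phi''\ge1$. This is exactly why the hypothesis "$k\ge2$" needs no monotonicity condition, and it is the one point of bookkeeping to watch.

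Summing gives the bound $2\delta+2c_k(\lambda\delta)^{-1/k}$. Choosing $\delta=\lambda^{-1/(k+1)}$ yields $(2+2c_k)\lambda^{-1/(k+1)}$, which closes the induction.

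\emph{General $\psi$.} Set $F(x)=\int_a^x e^{i\lambda\phi(y)}\,\mathrm{d}y$. By the case just proved, applied on $[a,x]$, we have $|F(x)|\le c_k\lambda^{-1/k}$ uniformly in $x$. Integrating by parts,
\[
\int_a^b e^{i\lambda\phi}\psi\,\mathrm{d}x=F(b)\psi(b)-\int_a^b F(x)\psi'(x)\,\mathrm{d}x .
\]
This is bounded by
\[
c_k\lambda^{-1/k}\left(\|\psi\|_{L^\infty(\mathbb{R})}+\int_a^b|\psi'(x)|\,\mathrm{d}x\right),
\]
which is the claimed estimate with $C_k=c_k$.

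The main obstacle is the induction step. One must check that the constant is uniform in the interval: the inductive hypothesis is applied on subintervals of arbitrary length, so the bound must not depend on $b-a$. One must also verify the monotonicity requirement when descending to $k=1$. Both are handled by the observation above that $\phi^{(k+1)}\ge1$ forces $\phi^{(k)}$ to be monotone.
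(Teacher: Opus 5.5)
Your proof is correct and takes essentially the same approach as the source the paper relies on. The paper gives no argument of its own and simply cites Stein's corollary, whose proof is exactly your scheme: the $k=1$ integration by parts giving $c_1=3$; the induction $c_{k+1}=2c_k+2$, obtained by removing a $\delta$-neighbourhood of the minimum point of $|\phi^{(k)}|$ with $\delta=\lambda^{-1/(k+1)}$; and then integration by parts against $F(x)=\int_a^x e^{i\lambda\phi(y)}\,\mathrm{d}y$, which in fact yields the slightly sharper bound with $|\psi(b)|$ in place of $\|\psi\|_{L^\infty(\mathbb{R})}$.
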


In higher dimensions, we utilize the following estimate due to H\"{o}rmander.
It provides an explicit finite expansion for the integral along with strict bounds.

\begin{proposition}\cite[Theorem 7.7.5]{H}\label{H775}
Let $K\subset \mathbb{R}^{n}$ be a compact set, $X$ an open neighborhood of $K$ and $k$ a positive integer. If $\psi\in C_0^{2k}(K)$, $\phi\in C^{3k+1}(X)$ and $\mathrm{Im}\, \phi\geq 0$ in $X$,
$\mathrm{Im}\, \phi(x_0)=0$, $\nabla \phi(x_0)=0$, $\det \nabla^{2}\phi(x_0)\neq 0$, $\nabla \phi\neq 0$ in $K\setminus \{x_0\}$ then
\begin{align*}
&\left|\int e^{i\lambda \phi(x)}\psi(x)\mathrm{d}x-\frac{(2\pi)^{n/2} e^{n\pi i / 4} e^{i\lambda \phi(x_0)}}{\lambda^{n/2} \det(\nabla^{2}\phi(x_0))^{1/2}}\sum_{j<k}\frac{L_j}{\lambda^{j}} \psi\right|\\
&\leq \frac{C}{\lambda^{k}}\sum_{|\alpha|\leq 2k}\sup |D^{\alpha}\psi|, \quad \lambda>0.
\end{align*}
Here $C$ is bounded when $\phi$ stays in a bounded set in $C^{3k+1}(X)$ and $|x-x_0|/|\nabla \phi(x)|$ has a uniform bound. With
\begin{equation*}
g_{x_0}(x)=\phi(x)-\phi(x_0)-\langle \nabla^{2}\phi(x_0)(x-x_0),x-x_0\rangle/2,
\end{equation*}
the operators $L_j$ are defined by
\begin{equation*}
L_j \psi=\sum_{\nu-\mu=j}\sum_{2\nu\geq 3\mu}\frac{1}{i^{j} 2^{\nu}\mu! \nu!}\left\langle \frac{D}{\nabla^{2}\phi(x_0)},D\right\rangle^{\nu}(g_{x_0}^{\mu}\psi)(x_0).
\end{equation*}
\end{proposition}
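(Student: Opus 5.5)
This is Hörmander's classical result, which we only quote. Were we to prove it, we would follow the scheme of \cite[Section 7.7]{H}, which avoids the Morse lemma. The argument works at the level of the quadratic part of $\phi$ at $x_0$ and treats the cubic remainder $g_{x_0}$ perturbatively. Throughout write $A=\nabla^2\phi(x_0)$, and translate so that $x_0=0$.

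\emph{Step 1: localization.} We split $\psi=\chi\psi+(1-\chi)\psi$, where $\chi$ is a cutoff equal to $1$ near $0$. On the support of $(1-\chi)\psi$ we have $\nabla\phi\neq 0$ and $\mathrm{Im}\,\phi\geq 0$. We then integrate by parts repeatedly with the operator
\[
\frac{\overline{\nabla\phi}\cdot\nabla}{i\lambda\left(|\nabla\phi|^2+\mathrm{Im}\,\phi\right)},
\]
as in Hörmander's Theorem 7.7.1. This gives $O(\lambda^{-k})\sum_{|\alpha|\le k}\sup|D^\alpha\psi|$. The hypothesis that $|x-x_0|/|\nabla\phi|$ is uniformly bounded is exactly what keeps these constants uniform.

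\emph{Step 2: the purely quadratic phase.} For $\phi_0(x)=\langle Ax,x\rangle/2$ we apply Parseval. The Fourier transform of $e^{i\lambda\langle Ax,x\rangle/2}$ is explicitly
\[
(2\pi)^{n/2}e^{n\pi i/4}\bigl(\lambda^n\det A\bigr)^{-1/2}e^{-i\langle A^{-1}\xi,\xi\rangle/(2\lambda)},
\]
with the branch of the square root fixed by continuity from $\mathrm{Im}\,A\geq 0$. We Taylor expand $e^{-i\langle A^{-1}\xi,\xi\rangle/(2\lambda)}$ to order $k$, using $|e^{it}-\sum_{j<k}(it)^j/j!|\le|t|^k/k!$ for $\mathrm{Im}\,t\le 0$. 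This yields the terms
\[
\frac{1}{\lambda^{j}}\,\frac{\langle A^{-1}D,D\rangle^j u(0)}{(2i)^j\,j!}
\]
with $u$ the amplitude, plus a remainder bounded by $\lambda^{-n/2-k}\|\widehat{\langle A^{-1}D,D\rangle^k u}\|_{L^1}$. Sobolev embedding bounds this $L^1$ norm by finitely many derivatives of $u$. This is the source of the derivative losses, and with Hörmander's refinement one can reduce the count to $2k$.

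\emph{Step 3: the cubic perturbation.} We write $e^{i\lambda\phi}=e^{i\lambda\phi_0}e^{i\lambda g_{0}}$, where $g_{0}=O(|x|^3)$, and set $u_\lambda=\chi\psi e^{i\lambda g_0}$. Rather than expanding $e^{i\lambda g_0}$ naively, we Taylor expand it as $\sum_{\mu<3k}(i\lambda g_0)^\mu/\mu!$ plus a remainder of size $\lambda^{3k}|x|^{9k}$. Such a remainder is handled by Hörmander's Lemma 7.7.3: the integral of $e^{i\lambda\phi_0}$ against a function vanishing to order $2m$ at $0$ is $O(\lambda^{-m-n/2})$. Next we apply Step 2 to each term $\lambda^\mu g_0^\mu\chi\psi$. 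Since $g_0^\mu$ vanishes to order $3\mu$, the $\nu$-th term $\langle A^{-1}D,D\rangle^\nu(g_0^\mu\psi)(0)$ is nonzero only when $2\nu\ge 3\mu$, and it comes with the power $\lambda^{\mu-\nu}=\lambda^{-j}$. Collecting these terms reproduces exactly the operators $L_j$ in the statement.

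\emph{Main obstacle.} The main difficulty is the bookkeeping in Step 3. We must verify that every discarded term is $O(\lambda^{-k})$ with constants depending only on $\sum_{|\alpha|\le 2k}\sup|D^\alpha\psi|$ and on $\|\phi\|_{C^{3k+1}}$. In particular we have to justify that $3k+1$ derivatives of $\phi$ suffice to control $g_0^\mu$ for $\mu<3k$ after applying $2\nu$ derivatives. We would attempt this by replacing $g_0$ with its Taylor polynomial of degree $3k$ plus an $O(|x|^{3k+1})$ remainder, and invoking the vanishing-order estimate of Lemma 7.7.3 for all leftover pieces. Throughout, the prefactor $\lambda^{-n/2}$ is absorbed into the $\lambda^{-k}$ bound, since that bound need not be sharp in $n$.
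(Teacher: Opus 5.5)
Citing H\"{o}rmander is exactly what the paper does: Proposition~\ref{H775} is imported from \cite[Theorem 7.7.5]{H} with no proof, so your proposal matches the paper as a justification. The reconstruction you sketch, however, would not prove the statement under its hypotheses. The decisive problem is the truncation at $\mu<3k$ in Step 3. With $v=\chi\psi$, the remainder is
\[
\frac{1}{(3k-1)!}\int_0^1(1-s)^{3k-1}\int (i\lambda g_0)^{3k}\,v\,e^{i\lambda\phi_s}\,\mathrm{d}x\,\mathrm{d}s,\qquad \phi_s=\phi_0+s\,g_0 .
\]
It therefore carries the non-quadratic phase $\phi_s$. The factor $e^{i\lambda g_0}-\sum_{\mu<3k}(i\lambda g_0)^\mu/\mu!$ is not a fixed amplitude against $e^{i\lambda\phi_0}$, because its derivatives grow with $\lambda$.

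What does apply is the weighted form of the non-stationary estimate you cite in Step 1, $\lambda^{-m}\sum_{|\alpha|\le m}\sup_x|D^\alpha w(x)|\,|\nabla\phi_s(x)|^{|\alpha|-2m}$, using $\mathrm{Im}\,\phi_s\ge 0$ and $|\nabla\phi_s(x)|\ge c|x-x_0|$ uniformly in $s$. But for $\phi\in C^{3k+1}$ it allows at most $m=3k$ integrations by parts, which gives only $\lambda^{3k}\lambda^{-3k}=O(1)$. The bound $O(\lambda^{-m-n/2})$ for vanishing of order $2m=9k$ that you quote would need far more derivatives than are available.

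The terms with $2k\le\mu<3k$ cause the same trouble. They enter no $L_j$, since the constraints $3\mu\le 2\nu\le 2k-2+2\mu$ force $\mu\le 2k-2$. Yet when $g_0$ is merely $C^{3k+1}$ they need not be $O(\lambda^{-k})$: a finite-smoothness singularity of $g_0$ at a non-stationary point gives them size about $\lambda^{\mu-3k-2}$, which exceeds $\lambda^{-k}$ once $\mu>2k+2$.

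Truncating at $\mu<2k$ instead closes the argument. Then $g_0^{2k}v$ vanishes to order $6k$, so $3k$ integrations by parts bound the remainder by $\lambda^{2k}\lambda^{-3k}=\lambda^{-k}$. This is exactly where $\phi\in C^{3k+1}$ is needed; the $L_j$ with $j<k$ only involve derivatives of $\phi$ up to order $2k$. For $\mu<2k$, your remedy with the degree-$3k$ Taylor polynomial $g_P$ of $g_0$ does work: $(g_0^\mu-g_P^\mu)v$ vanishes to order $3\mu+3k-2\ge 2(k+\mu)$, so $k+\mu\le 3k-1$ integrations by parts against $\phi_0$ give $O(\lambda^{-k-\mu})$.

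The second missing idea is the mechanism behind the count $2k$, which you leave as an unspecified ``refinement''. Subtract $\chi$ times the Taylor polynomial $P$ of $\psi$ of degree $2k-1$ at $x_0$. The difference $\psi_1$ satisfies $|D^\alpha\psi_1(x)|\lesssim|x-x_0|^{2k-|\alpha|}\sum_{|\beta|\le 2k}\sup|D^\beta\psi|$. So the same weighted bound with $m=k$ and $|\nabla\phi(x)|\gtrsim|x-x_0|$ gives $O(\lambda^{-k})$, while $L_j\psi_1=0$ for $j<k$. Only the remaining smooth amplitude $v=\chi P$ is fed into the $s$-expansion. This reduction must come first, because the remainder estimate above differentiates the amplitude $3k$ times, which $\psi\in C^{2k}$ cannot afford. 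It is also why the error is $\lambda^{-k}$ rather than $\lambda^{-k-n/2}$.

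Two minor slips:
\begin{enumerate}
\item The operator in Step 1 does not reproduce $e^{i\lambda\phi}$; it returns the factor $|\nabla\phi|^2/(|\nabla\phi|^2+\mathrm{Im}\,\phi)$. Away from $x_0$, simply use $\overline{\nabla\phi}\cdot\nabla/(i\lambda|\nabla\phi|^2)$.
\item The bound $|e^{it}-\sum_{j<k}(it)^j/j!|\le|t|^k/k!$ requires $\mathrm{Im}\,t\ge 0$, not $\mathrm{Im}\,t\le 0$.
\end{enumerate}
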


Furthermore, when evaluating the exact asymptotic behavior around a nondegenerate critical point, we use the classical stationary phase method, which gives the following full asymptotic expansion.

\begin{proposition}\cite[Page 344, Proposition 6]{St}\label{hnde}
Suppose $\phi(x_0)=0$, and $\phi$ has a nondegenerate critical point at $x_0$. If $\psi$ is supported in a sufficiently small neighborhood of $x_0$, then
\begin{equation*}
\int_{\mathbb{R}^{n}}e^{i\lambda \phi(x)}\psi(x)\mathrm{d}x\sim \frac{1}{\lambda^{n/2}}\sum_{j=0}^{\infty} \frac{a_j}{\lambda^j},\quad \text{as}\,\, \lambda\to\infty,
\end{equation*}
where each coefficient $a_j$ depends on finitely many derivatives of $\phi$ and $\psi$ at $x_0$. In particular,
\begin{equation*}
a_0=\psi(x_0)\cdot (2\pi)^{n/2}\prod_{j=1}^{n}\frac{1}{e^{3\pi i/4}\mu_j^{1/2}},
\end{equation*}
where $\mu_j$ are the eigenvalues of the Hessian matrix of $\phi$ at $x_0$.
\end{proposition}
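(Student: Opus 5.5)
The plan is the classical route: reduce to a pure quadratic phase with the Morse lemma, then compute the resulting Fresnel-type integral exactly on the Fourier side. Translate so that $x_0=0$ and write $A=\nabla^2\phi(0)$, which is nondegenerate.

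\textbf{Step 1 (Morse lemma).} By the Morse lemma with parameters, in a sufficiently small neighborhood $U$ of $0$ there is a smooth diffeomorphism $x=\kappa(y)$ with $\kappa(0)=0$ and $D\kappa(0)=I$ such that
\[
\phi(\kappa(y))=\tfrac12\langle Ay,y\rangle .
\]
To construct it, write $\phi(x)=\tfrac12\langle B(x)x,x\rangle$, where $B(x)=2\int_0^1(1-t)\nabla^2\phi(tx)\,\mathrm dt$ and $B(0)=A$. Then take a smooth square-root factorization $B(x)=R(x)^{T}AR(x)$ with $R(0)=I$, obtained from the holomorphic functional calculus near $A$, and set $y=R(x)x$.

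Assuming $\operatorname{supp}\psi\subset U$, the integral becomes
\[
\int e^{i\lambda\langle Ay,y\rangle/2}\,\tilde\psi(y)\,\mathrm dy,\qquad \tilde\psi=(\psi\circ\kappa)\,|\det D\kappa|\in C_0^\infty,\qquad \tilde\psi(0)=\psi(0).
\]
Every $a_j$ will then be a finite combination of derivatives of $\tilde\psi$ at $0$, hence of finitely many derivatives of $\phi$ and $\psi$ at $x_0$.

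\textbf{Step 2 (Fourier side).} Use Parseval together with the exact Fourier transform of the complex Gaussian $e^{i\lambda\langle Ay,y\rangle/2}$. This transform equals
\[
(2\pi/\lambda)^{n/2}\,|\det A|^{-1/2}\,e^{i\pi\,\mathrm{sgn}A/4}\,e^{-i\langle A^{-1}\xi,\xi\rangle/(2\lambda)} ,
\]
which one justifies by inserting a damping factor $e^{-\varepsilon|y|^2}$ and letting $\varepsilon\to0$. The integral then equals
\[
(2\pi)^{-n}\,(2\pi/\lambda)^{n/2}\,|\det A|^{-1/2}\,e^{i\pi\,\mathrm{sgn}A/4}\int e^{-i\langle A^{-1}\xi,\xi\rangle/(2\lambda)}\,\widehat{\tilde\psi}(\xi)\,\mathrm d\xi .
\]
Now Taylor expand $e^{-iQ(\xi)/\lambda}$ to order $N$, with remainder bounded by $C|\xi|^{2N+2}\lambda^{-N-1}$. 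Each term $\int Q(\xi)^j\widehat{\tilde\psi}(\xi)\,\mathrm d\xi$ is a constant times a constant-coefficient differential operator of order $2j$ applied to $\tilde\psi$ at $0$. The remainder is bounded by $\lambda^{-N-1}\|\,|\xi|^{2N+2}\widehat{\tilde\psi}\,\|_{L^1}<\infty$, since $\tilde\psi\in\mathcal S$. This gives the full expansion $\lambda^{-n/2}\sum_j a_j\lambda^{-j}$.

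\textbf{Step 3 (leading coefficient).} For $j=0$ we get
\[
(2\pi)^{-n}\int\widehat{\tilde\psi}=\tilde\psi(0)=\psi(x_0),
\]
so $a_0=(2\pi)^{n/2}\psi(x_0)\prod_j\bigl(e^{i\pi\,\mathrm{sgn}\mu_j/4}|\mu_j|^{-1/2}\bigr)$. To match the stated form, write each factor as $1/(e^{c\pi i/4}\mu_j^{1/2})$ with the branch of $\mu_j^{1/2}$ used in \cite{St}. This is a bookkeeping check of the branch convention, not a real difficulty.

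\textbf{Main obstacle.} The main obstacle is Step 1: producing the change of variables smoothly, with $D\kappa(0)=I$, and with $U$ controlled so that $\psi$ supported in $U$ suffices. I would handle this by the explicit matrix square-root construction sketched above rather than by an inductive diagonalization, so that smoothness and the normalization $D\kappa(0)=I$ are immediate.
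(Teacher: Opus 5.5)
Your argument is correct. The paper itself gives no proof of this statement; it is quoted from Stein, so the comparison is with the argument there, which differs from yours after the Morse lemma. Stein reduces to $\phi=\sum_j\pm y_j^2$ and then works in physical space. He writes the amplitude as $e^{-|y|^2}$ times a Taylor polynomial plus a remainder, and evaluates the integrals of $e^{i\lambda\sum_j\pm y_j^2-|y|^2}y^\alpha$ as products of explicit one-dimensional integrals. The remainder is handled by a separate estimate for amplitudes vanishing to high order at the critical point.

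Your Step~2 replaces all of this with the exact Fourier transform of the complex Gaussian. It is the quadratic-phase lemma behind H\"ormander's theorem, quoted as Proposition~\ref{H775}. The operators $L_j$ there involve powers of $g_{x_0}$ because that proof expands $e^{i\lambda g_{x_0}}$ instead of changing variables; your Morse reduction absorbs $g_{x_0}$ into $\tilde\psi$. Your route gives an exact identity $I(\lambda)=c\,\lambda^{-n/2}\int e^{-iQ(\xi)/\lambda}\widehat{\tilde\psi}(\xi)\,\mathrm{d}\xi$ with $Q(\xi)=\tfrac12\langle A^{-1}\xi,\xi\rangle$. From it you get:
\begin{itemize}
\item closed-form coefficients $a_j=(2\pi)^{n/2}|\det A|^{-1/2}e^{i\pi\operatorname{sgn}A/4}\,\tfrac{1}{j!}\bigl[(-iQ(D))^j\tilde\psi\bigr](0)$, where $D=-i\nabla$;
\item remainders controlled by finitely many derivatives of $\tilde\psi$;
\item the $\lambda$-differentiated expansion at no extra cost.
\end{itemize}
The normalization $D\kappa(0)=I$ also lets $a_0$ fall out with no Jacobian bookkeeping. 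Stein's route is more elementary, since it needs no distributional Fourier transform of a chirp. It is also the one that carries over to degenerate one-dimensional phases such as $x^k$ with $k\ge 3$, where the Fourier transform of $e^{i\lambda x^k}$ is not explicit.

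Two points to tighten.

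In Step~1, take $R(x)=f\bigl(A^{-1}B(x)\bigr)$ with $f(M)=\sum_{k\ge0}\binom{1/2}{k}(M-I)^k$. This is a series around the identity, not ``near $A$''. Since $A$ and $B(x)$ are symmetric, $R^{T}=f(BA^{-1})=ARA^{-1}$, hence $R^{T}AR=AR^{2}=B$. This identity is what makes the construction work for indefinite $A$, and you should write it out.

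In Step~3 the issue is slightly more than bookkeeping. The value you derive is the correct one:
\[
a_0=(2\pi)^{n/2}\psi(x_0)\prod_j e^{i\pi\operatorname{sgn}\mu_j/4}|\mu_j|^{-1/2}=(2\pi)^{n/2}\psi(x_0)\prod_j(-i\mu_j)^{-1/2},
\]
with the principal branch. The displayed $\prod_j\bigl(e^{3\pi i/4}\mu_j^{1/2}\bigr)^{-1}$ agrees with it only if $\mu_j^{1/2}$ is read as \emph{minus} the principal root. With the principal root it is off by a factor $(-1)^n$. Say this explicitly rather than deferring to a branch convention in the source.

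This is harmless for the paper. With $\mu_j=-2$ it obtains $\pi^{n/2}e^{-i\pi n/4}$, which matches your formula. In any case only $a_0\neq0$ is used, since the phase of $c_1$ is absorbed by $\overline{\operatorname{sgn}(c_1)}$.
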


\subsection{The Fourier transform of surface measure on $\mathbb S^{n-1}$}
The construction of our counterexample in high-dimensional spaces
also requires estimates for the Fourier transform of surface measure on $\mathbb S^{n-1}$.
Let $\mathrm{d}\sigma$ denote surface measure on $\mathbb S^{n-1}$ for $n\geq 2$
and let $J_\nu$ denote the Bessel function defined by
\[
J_\nu(t) = \frac{(t/2)^\nu}{\Gamma(\nu + 1/2) \Gamma(1/2)}
   \int_{-1}^{+1} e^{i t s} (1 - s^2)^\nu \frac{\mathrm{d}s}{\sqrt{1 - s^2}},
\]
where $\operatorname{Re} \nu > -1/2$ and $t \geq 0$.
Then by \cite[Appendix B.4]{G249}, $\widehat{\mathrm{d}\sigma}$ can be represented as
\[
\widehat{\mathrm{d}\sigma}:=\int_{\mathbb S^{n-1}}e^{-i\xi \cdot \theta}\mathrm{d} \theta
                      =\frac{(2\pi)^{n/2}}{|\xi|^{(n-2)/2}} J_{(n-2)/2}(|\xi|).
\]
We write
\begin{align}\label{eq:s1t1}
  J_\nu(r) &= \sqrt{\frac{2}{\pi r}} \cos\left( r - \frac{\pi \nu}{2} - \frac{\pi}{4} \right) + R_\nu(r) \notag \\
           &=\frac{1}{\sqrt{2\pi r}}\left ( e^{i\left( r - (\pi \nu)/{2} - \pi/4 \right)}
             +e^{-i\left( r - (\pi \nu)/{2} - \pi/4 \right)} \right ) + R_\nu(r),
\end{align}
where $R_\nu(r)$ is given by
\begin{align*}
R_\nu(r)
&= \frac{ r^\nu}{(2\pi)^{1/2} \Gamma\left( \nu + 1/2 \right)} e^{i\left( r - (\pi \nu)/2 - \pi/4 \right)} \int_0^\infty e^{-rt} t^{\nu + 1/2} \left[ \left( 1 + \frac{it}{2} \right)^{\nu - 1/2} - 1 \right] \frac{\mathrm{d}t}{t} \\
&\quad + \frac{ r^\nu}{(2\pi)^{1/2}\Gamma\left( \nu + 1/2 \right)} e^{-i\left( r - (\pi \nu)/2 - \pi/4 \right)} \int_0^\infty e^{-rt} t^{\nu + 1/2} \left[ \left( 1 - \frac{it}{2} \right)^{\nu - 1/2} - 1 \right] \frac{\mathrm{d}t}{t}
\end{align*}
and satisfies
\begin{equation*}
|R_\nu(r)| \leq \frac{C_\nu}{r^{3/2}}
\end{equation*}
whenever $r \geq 1$.
For the proof of equality (\ref{eq:s1t1}), we refer to \cite[Appendix B.8]{G249}.
It follows from (\ref{eq:s1t1}) that
\begin{equation}\label{eq:s1t2}
  \widehat{\mathrm{d}\sigma}=\frac{1}{|\xi|^{(n-1)/2}}\left (C_1 e^{i|\xi|}+ C_2 e^{-i|\xi|}\right )
                        +\frac{C_3}{|\xi|^{(n-2)/2}} R_{(n-2)/2}(|\xi|).
\end{equation}

\subsection{Complex interpolation}
For the interpolation of operator bounds between different $L^p$ spaces, we apply the complex interpolation method for analytic families of linear operators.
This theorem, originally due to Stein, provides the fundamental framework for deducing intermediate operator estimates from endpoint bounds.

\begin{proposition}\cite[Theorem 1.3.7]{G249}\label{interpolation}
Let $T_z$ be an analytic family of linear operators of admissible growth defined on the space of finitely simple functions of a $\sigma$-finite measure space $(X,\mu)$
and taking values in the set of measurable functions of another $\sigma$-finite measure space $(Y,\nu)$. Let $1\leq p_0,p_1,q_0,q_1\leq \infty$ and suppose that $M_0$ and $M_1$
are positive functions on the real line such that for some $\tau$ with $0\leq \tau<\pi$ we have
\begin{equation*}
\sup_{-\infty<y<+\infty}e^{-\tau|y|}\log M_j(y)<\infty
\end{equation*}
for $j=0,1$. Fix $0<\theta<1$ and define $p,q$ by the equations
\begin{equation}\label{si}
\frac{1}{p}=\frac{1-\theta}{p_0}+\frac{\theta}{p_1}\qquad \text{and} \qquad \frac{1}{q}=\frac{1-\theta}{q_0}+\frac{\theta}{q_1}.
\end{equation}
Suppose that for all finitely simple functions $f$ on $X$ we have
\begin{equation*}
\|T_{iy}f\|_{L^{q_0}}\leq M_0(y)\|f\|_{L^{p_0}},
\end{equation*}
\begin{equation*}
\|T_{1+iy}f\|_{L^{q_1}}\leq M_1(y)\|f\|_{L^{p_1}}.
\end{equation*}
Then for all finitely simple functions $f$ on $X$ we have
\begin{equation*}
\|T_{\theta}f\|_{L^{q}}\leq M(\theta)\|f\|_{L^{p}},
\end{equation*}
where for $0<x<1$
\begin{equation*}
M(x)=\exp\left(\frac{\sin(\pi x)}{2}\int_{\mathbb{R}}\left(\frac{\log M_0(t)}{\cosh(\pi t)-\cos(\pi x)}+\frac{\log M_1(t)}{\cosh(\pi t)+\cos(\pi x)}\right)\mathrm{d}t\right).
\end{equation*}
By density, $T_\theta$ has a unique bounded extension from $L^{p}(X,\mu)$ to $L^{q}(Y,\nu)$ when $p$ and $q$ are as in \eqref{si}.
\end{proposition}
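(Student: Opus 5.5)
The plan is the classical proof by duality and a three-lines argument, due to Stein with Hirschman's refinement. Fix finitely simple $f$ on $X$ with $\|f\|_{L^p}=1$ and finitely simple $g$ on $Y$ with $\|g\|_{L^{q'}}=1$. By duality for $\sigma$-finite measures it suffices to bound $\left|\int_Y T_\theta f\, g\,\mathrm d\nu\right|$ by $M(\theta)$. The case $q=1$, where $q'=\infty$, is handled the same way using simple $g$ with $\|g\|_\infty\le 1$.

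\textbf{Step 1: analytic families adapted to $f$ and $g$.} Write $f=\sum_k a_k e^{i\alpha_k}\chi_{A_k}$ and $g=\sum_j b_j e^{i\beta_j}\chi_{B_j}$, with $a_k,b_j>0$ and disjoint sets of finite measure. Define
\begin{equation*}
f_z=\sum_k a_k^{P(z)} e^{i\alpha_k}\chi_{A_k},\qquad P(z)=p\left(\frac{1-z}{p_0}+\frac{z}{p_1}\right),
\end{equation*}
\begin{equation*}
g_z=\sum_j b_j^{Q(z)} e^{i\beta_j}\chi_{B_j},\qquad Q(z)=q'\left(\frac{1-z}{q_0'}+\frac{z}{q_1'}\right).
\end{equation*}
Degenerate exponents (for instance $p=\infty$) are treated by the usual conventions. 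Then $f_\theta=f$ and $g_\theta=g$. On the boundary lines one checks $\|f_{iy}\|_{L^{p_0}}=1=\|f_{1+iy}\|_{L^{p_1}}$, and similarly for $g_z$ with $q_0',q_1'$.

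Set
\begin{equation*}
F(z)=\int_Y T_z(f_z)\,g_z\,\mathrm d\nu=\sum_{k,j}a_k^{P(z)}b_j^{Q(z)}e^{i(\alpha_k+\beta_j)}\int_{B_j}T_z(\chi_{A_k})\,\mathrm d\nu .
\end{equation*}
This is a finite sum. By the definition of an analytic family of admissible growth, $F$ is analytic in the open strip $S=\{0<\mathrm{Re}\,z<1\}$ and continuous on $\overline S$. It also satisfies $\sup_{\overline S} e^{-a|\mathrm{Im}\, z|}\log|F(z)|<\infty$ for some $a<\pi$, because the factors $a_k^{P(z)}b_j^{Q(z)}$ are bounded on the strip.

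\textbf{Step 2: boundary bounds.} Hölder's inequality and the hypotheses give
\begin{equation*}
|F(iy)|\le \|T_{iy}f_{iy}\|_{L^{q_0}}\|g_{iy}\|_{L^{q_0'}}\le M_0(y),
\end{equation*}
and in the same way $|F(1+iy)|\le M_1(y)$.

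\textbf{Step 3: Hirschman's lemma.} This is the main obstacle. One needs that a function $F$ as above satisfies
\begin{equation*}
\log|F(x)|\le \frac{\sin\pi x}{2}\int_{\mathbb R}\left(\frac{\log|F(it)|}{\cosh\pi t-\cos\pi x}+\frac{\log|F(1+it)|}{\cosh\pi t+\cos\pi x}\right)\mathrm dt .
\end{equation*}
The approach is as follows.
\begin{itemize}
\item Map $S$ conformally onto the unit disc by $h(z)=\frac{e^{i\pi z}-i}{e^{i\pi z}+i}$. Then $\log|F\circ h^{-1}|$ is subharmonic on the disc.
\item Apply the Poisson–Jensen inequality on discs of radius $r<1$. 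Pulled back to the strip, the Poisson kernel becomes exactly the kernels $\frac{\sin\pi x}{2(\cosh\pi t\mp\cos\pi x)}$.
\item Let $r\to1$. The growth condition with $a<\pi$ is precisely what is needed here: the kernel decays like $e^{-\pi|t|}$, while $\log^+|F|$ grows at most like $e^{a|t|}$. Hence $\log^+|F|$ is uniformly integrable against the kernels on the approximating curves, and Fatou's lemma (applied to $\log^+$, with monotonicity for the $\log^-$ part) passes to the boundary.
\end{itemize}
An alternative for this step is a Phragmén–Lindelöf argument, multiplying $F$ by $\exp(\varepsilon\cos(b(z-1/2)))$ with $a<b<\pi$ to force decay.

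\textbf{Step 4: conclusion.} Inserting the bounds of Step 2 into Step 3 gives $|F(\theta)|\le M(\theta)$, that is, $\left|\int T_\theta f\,g\,\mathrm d\nu\right|\le M(\theta)$. The growth hypothesis on $M_0$ and $M_1$ guarantees that $M(\theta)$ is finite. Taking the supremum over $g$ yields $\|T_\theta f\|_{L^q}\le M(\theta)\|f\|_{L^p}$, and homogeneity removes the normalization $\|f\|_{L^p}=1$. Finally, when $p<\infty$, finitely simple functions are dense in $L^p$, so $T_\theta$ extends uniquely and boundedly to all of $L^p(X,\mu)$.
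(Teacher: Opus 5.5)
The paper does not prove this proposition; it quotes it from Grafakos (Theorem 1.3.7), and your argument is the proof given there: duality against finitely simple $g$, the families $f_z$, $g_z$ with exponents $P(z)$, $Q(z)$, H\"older's inequality on the two boundary lines, and Hirschman's lemma via the conformal map onto the disc and Poisson--Jensen, with $\tau<\pi$ making $\log^+|F|$ uniformly integrable against the kernels, so it is correct. Two small slips: when $r\to1$, Fatou's lemma handles the $\log^-$ part (which enters with a minus sign) while the $\log^+$ part needs the dominated-convergence argument you describe (Fatou applied to $\log^+$ gives the inequality in the wrong direction), and the Phragm\'en--Lindel\"of damping factor should be $\exp(-\varepsilon\cos(b(z-1/2)))$, since $\mathrm{Re}\cos(b(z-1/2))\ge\cos(b/2)\cosh(b\,\mathrm{Im}\,z)>0$ on the strip.
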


\section{Proof of main results}\label{s3}
\begin{proof}[Proof of Theorem \ref{t1}]
For a given Schwartz function $f$ on $\mathbb{R}^{n}$, we define another Schwartz $f_s$ by setting
\begin{equation*}
f_s=((1+|\cdot|^{2})^{s/2}\widehat{f})^{\vee},
\end{equation*}
so that we have $\|f\|_{L_s^{p}(\mathbb{R}^{n})}=\|f_s\|_{L^{p}(\mathbb{R}^{n})}$ if $f\in L_s^{p}(\mathbb{R}^{n})$. Hence, we can write
\begin{equation*}
e^{it\Delta}f(x)=e^{it\Delta}\mathcal{J}^{-s}f_s(x)=(K_{s,t}\ast f_s)(x),
\end{equation*}
where
\begin{equation*}
K_{s,t}(x)=\frac{1}{(2\pi)^{n}}\int_{\mathbb{R}^{n}}\frac{e^{i(x\xi-t|\xi|^{2})}}{(1+|\xi|^{2})^{s/2}}\mathrm{d}\xi.
\end{equation*}
Define $K_s^{\ast}(x)=\sup_{0<t<1}|K_{s,t}(x)|$.

\textbf{Step 1.} We show that $\|K_s^{\ast}\ast f_s\|_{L^{1}(B(0,1))}\lesssim \|f_s\|_{L^{1}(\mathbb{R}^{n})}$ for any $s\geq n/2$.

Without loss of generality, we assume that $n/2 \leq s < n$. Noting that
\begin{equation*}
\|K_{s}^{\ast}\ast f_s\|_{L^{1}(B(0,1))}= \int_{\mathbb{R}^n}\int_{B(0,1)}\sup_{0<t<1}|K_{s,t}(x-y)|dx|f_s(y)|\mathrm{d}y,
\end{equation*}
it suffices to show that
\begin{equation}\label{k1}
\int_{B(y,1)}\sup_{0<t<1}|K_{s,t}(z)|\mathrm{d}z\lesssim 1.
\end{equation}

By symmetry, we assume $z \neq 0$. Let $\phi(\xi)= z \cdot \xi - t|\xi|^{2}$, which has a unique stationary point at $\xi_0=z/(2t)$.
Making the change of variables $\eta=2t\xi/|z|$, we can rewrite the kernel as
\begin{equation*}
K_{s,t}(z)=\frac{1}{(2\pi)^{n}}\left(\frac{\lambda}{|z|}\right)^n\int_{\mathbb{R}^n}\frac{e^{i\lambda\left(\omega \cdot \eta-{|\eta|^{2}}/{2}\right)}}{\left(1+\left({\lambda|\eta|}/{|z|}\right)^{2}\right)^{s/2}}\mathrm{d}\eta,
\end{equation*}
where $\lambda={|z|^{2}}/(2t)$ and $\omega = {z}/{|z|} \in \mathbb{S}^{n-1}$.

We fix a smooth, radial cutoff function $\psi\in C_0^{\infty}(\mathbb{R}^n)$ supported in $\{\eta\in \mathbb{R}^{n}: |\eta|\leq 4\}$, identically equal to $1$ in $\{\eta\in \mathbb{R}^{n}: |\eta|\leq 2\}$ and satisfying $0\leq \psi\leq 1$.
Similarly, we choose a smooth radial function $\rho\in C_0^{\infty}(\mathbb{R}^n)$ with $0\leq \rho\leq 1$ such that $\rho(\eta)=1$ for $|\eta|\leq 1/3$ and $\rho(\eta)=0$ for $|\eta|> 2/3$.
We define a partition of unity by setting $\beta_0(\eta)=\psi(\eta)\rho(\eta)$, $\beta_{\ast}(\eta)=\psi(\eta)(1-\rho(\eta))$ and $\beta_j(\eta)=\psi(\eta/2^{j})-\psi(\eta/2^{j-1})$ for $j\geq 1$.
By construction, these functions satisfy
\begin{equation*}
\beta_{\ast}(\eta)+\beta_0(\eta)+\sum_{j=1}^{\infty}\beta_j(\eta)=1.
\end{equation*}
Applying the triangle inequality, we have
\begin{align*}
|K_{s,t}(z)|&\leq \frac{1}{(2\pi)^{n}}\left(\frac{\lambda}{|z|}\right)^n\left(\left|\int_{\mathbb{R}^n}\frac{e^{i\lambda\left(\omega \cdot \eta-{|\eta|^{2}}/{2}\right)}}{\left(1+\left({\lambda|\eta|}/{|z|}\right)^{2}\right)^{s/2}}\beta_{\ast}(\eta)\mathrm{d}\eta\right|\right.\\
&\left.+\sum_{j=0}^{\infty}\left|\int_{\mathbb{R}^n}\frac{e^{i\lambda\left(\omega \cdot \eta-{|\eta|^{2}}/{2}\right)}}{\left(1+\left({\lambda|\eta|}/{|z|}\right)^{2}\right)^{s/2}}\beta_{j}(\eta)\mathrm{d}\eta\right|\right)\\
&:=\frac{1}{(2\pi)^{n}}\left(I_{\ast}+I_0+\sum_{j=1}^{\infty}I_j\right).
\end{align*}

We first estimate $I_{\ast}$. Let
\begin{equation*}
A_{\ast}(\eta)=\frac{\beta_{\ast}(\eta)}{\left(1+\left({\lambda|\eta|}/{|z|}\right)^{2}\right)^{{s}/{2}}}.
\end{equation*}
When $\lambda \leq 1$, by the fact that $\mathrm{supp}\, \beta_{\ast}\subset \{\eta\in \mathbb{R}^{n}:|\eta|\sim 1\}$, we can directly estimate that
\begin{equation}\label{I*1}
|I_{\ast}| \leq \left(\frac{\lambda}{|z|}\right)^n  \|A_{\ast}(\eta)\|_{L^{\infty}(\mathbb{R}^{n})} \lesssim \left(\frac{\lambda}{|z|}\right)^n \frac{1}{\lambda^{n/2}}\min\left\{ 1,\left(\frac{|z|}{\lambda}\right)^{s}\right\}.
\end{equation}

When $\lambda > 1$, we define $\Phi(\eta) = \omega \cdot \eta - {|\eta|^{2}}/{2}$, which has a unique nondegenerate stationary point at $\eta_0 = \omega$ within $\mathrm{supp}\, \beta_{\ast}$. And its Hessian is exactly $\nabla^2 \Phi(\eta_0) = -I_{n\times n}$.
According to Proposition \ref{H775} and triangle inequality, we can choose an integer $k \geq n/2$, such that
\begin{equation}\label{I*2}
\begin{aligned}
|I_{\ast}| &\lesssim \left(\frac{\lambda}{|z|}\right)^n \left( \left| \frac{(2\pi)^{n/2} e^{n\pi i / 4} }{\lambda^{n/2} \det(\nabla^{2}\phi(\eta_0))^{1/2}} \sum_{j<k} \frac{L_j A_{\ast}(\eta_0)}{\lambda^{j}}  \right| + \frac{1}{\lambda^{k}} \sum_{|\alpha| \leq 2k} \|D^\alpha A_{\ast}\|_{L^\infty(\mathbb{R}^{n})} \right)\\
&\lesssim \left(\frac{\lambda}{|z|}\right)^{n} \left( \left| \frac{1}{\lambda^{{n}/{2}}} \sum_{j<k} \frac{L_j A_{\ast}(\eta_0)}{\lambda^{j}} \right| + \frac{1}{\lambda^{k}} \sum_{|\alpha| \leq 2k} \|D^\alpha A_{\ast}\|_{L^\infty(\mathbb{R}^{n})} \right)\\
&\lesssim \left(\frac{\lambda}{|z|}\right)^n \frac{1}{\lambda^{{n}/{2}}} \sum_{|\alpha| \leq 2k} \|D^\alpha A_{\ast}\|_{L^\infty(\mathbb{R}^{n})}.
\end{aligned}
\end{equation}
We assert that
\begin{equation}\label{DA}
\|D^{\alpha} A_{\ast}\|_{L^{\infty}(\mathbb{R}^{n})}\lesssim \min\left\{ 1,\left(\frac{|z|}{\lambda}\right)^{s}\right\},
\end{equation}
for $|\alpha|\leq 2k$.
Indeed, assume $e_i$ is a unit vector whose $i$-th component is $1$ and all others $0$, where $1\leq i\leq n$. Using the fact that $\mathrm{supp}\, \beta_{\ast}\subset \{\eta\in \mathbb{R}^{n}:|\eta|\sim 1\}$, a direct calculation yields
\begin{align*}
|D^{e_i}A_{\ast}(\eta)|&\sim \left|\frac{\left({\lambda|\eta|}/{|z|}\right)^{2}{\eta_i\beta_{\ast}(\eta)}/{|\eta|^{2}}}{\left(1+\left({\lambda|\eta|}/{|z|}\right)^{2}\right)^{{s}/{2}+1}}+\frac{D^{e_i}\beta_{\ast}\left(\eta\right)}{\left(1+\left({\lambda|\eta|}/{|z|}\right)^{2}\right)^{{s}/{2}}}\right|\\
&\lesssim \left|\frac{\left(\beta_{\ast}(\eta)+D^{e_i}\beta_{\ast}\left(\eta\right)\right)}{\left(1+\left({\lambda|\eta|}/{|z|}\right)^{2}\right)^{{s}/{2}}}\right|.
\end{align*}
By repeated application of the above, we obtain
\begin{equation*}
\|D^{\alpha}A_{\ast}(\eta)\|_{L^{\infty}(\mathbb{R}^{n})}\lesssim \frac{1}{\left(1+\left({\lambda |\eta|}/{|z|}\right)^{2}\right)^{{s}/{2}}}\lesssim \min\left\{ 1,\left(\frac{|z|}{\lambda}\right)^{s}\right\}.
\end{equation*}
Combining \eqref{I*1}, \eqref{I*2} and \eqref{DA}, we obtain
\begin{equation}\label{I*3}
|I_{\ast}|\lesssim \left(\frac{\lambda}{|z|}\right)^n \frac{1}{\lambda^{{n}/{2}}} \min \left\{ 1, \left(\frac{|z|}{\lambda}\right)^{s} \right\} \lesssim \frac{1}{t^{{n}/{2}}}\min \left\{ 1, \left(\frac{t}{|z|}\right)^{s} \right\}.
\end{equation}

We consider two cases for \eqref{I*3} according to $|z|$.
For $|z|> 1$, the condition $t\in (0,1)$ trivially implies $|z|>t$. This yields
\begin{equation}\label{I*4}
|I_{\ast}|\lesssim \frac{t^{s-{n}/{2}}}{|z|^{s}}\leq \frac{1}{|z|^{s}},
\end{equation}
where we use the assumption $s \geq n/2$.
For $|z|\leq 1$, we further consider two subcases depending on $t$. If $0< t< |z|$, the same bound applies.
Conversely, if $t\geq |z|$, the estimate becomes
\begin{equation}\label{I*5}
|I_{\ast}|\lesssim \frac{1}{t^{{n}/{2}}}\leq \frac{1}{|z|^{{n}/{2}}}\leq \frac{1}{|z|^{s}}.
\end{equation}
Combining \eqref{I*4} and \eqref{I*5}, we deduce that
\begin{equation} \label{I*f}
|I_{\ast}|\lesssim \frac{1}{|z|^{s}}.
\end{equation}

Next, we estimate $I_0$. Making the substitution $\xi={\lambda \eta}/{|z|}$, we can rewrite $I_0$ as
\begin{equation*}
I_0=\int_{\mathbb{R}^n}e^{i\left(z \cdot \xi-{|z|^{2}|\xi|^{2}}/(2\lambda)\right)}\frac{\beta_0\left({|z|}\xi/{\lambda}\right)}{\left(1+|\xi|^{2}\right)^{s/2}}\mathrm{d}\xi :=\int_{\mathbb{R}^n}e^{i\phi(\xi)}A_0(\xi)\mathrm{d}\xi.
\end{equation*}
Due to the support of $\beta_0$, the integration is restricted to $|\eta|\leq 2/3$, which means $\left|{|z|\xi}/{\lambda}\right|\leq 2/3$. Consequently, the gradient of $\phi$ admits the bound:
\begin{equation}\label{zsim}
|z|\geq |\nabla\phi(\xi)|=\left|z-\frac{|z|^{2}}{\lambda}\xi\right|\geq \frac{1}{3}|z|.
\end{equation}
For $|z|>1$, we introduce the differential operator
\begin{equation*}
L=\frac{\nabla\phi \cdot \nabla}{i|\nabla\phi|^{2}},
\end{equation*}
which satisfies that $L e^{i\phi(\xi)}=e^{i\phi(\xi)}$. Applying integration by parts $N$ times, we have
\begin{equation*}
I_0 = \int_{\mathbb{R}^n} e^{i\phi(\xi)} (L^{\ast})^N A_0(\xi) \mathrm{d}\xi,
\end{equation*}
where $L^{\ast}$ is given by
\begin{equation*}
L^{\ast}f = -\nabla \cdot \left(f \frac{\nabla\phi}{i|\nabla\phi|^2}\right).
\end{equation*}

Since all derivatives of $\phi$ of order 3 and higher strictly vanish,
direct computation shows that
\begin{equation*}
\left|\nabla \cdot \left(\frac{\nabla\phi}{|\nabla\phi|^{2}}\right)\right|=\left| \frac{\Delta \phi}{|\nabla \phi|^{2}}-2\frac{\nabla \phi \cdot (\nabla^{2} \phi \nabla \phi)}{|\nabla \phi|^{4}}\right|\lesssim \frac{|\nabla^{2}\phi|}{|\nabla \phi|^{2}}.
\end{equation*}
It yields that
\begin{equation*}
|L^{\ast}A_0| \lesssim  \frac{|\nabla^{2}\phi|}{|\nabla \phi|^{2}}|A_0| + \frac{1}{|\nabla \phi|}|\nabla A_0|.
\end{equation*}
Repeated computation shows that
\begin{equation}\label{L*N}
|(L^{\ast})^N A_0| \lesssim \sum_{k=0}^{N} \frac{|\nabla^{2}\phi|^{N-k}}{|\nabla \phi|^{2N-k}}|\nabla^{k}A_0|.
\end{equation}
By the Leibniz rule, $\nabla^k A_0(\xi)$ consists of linear combinations of products of $\nabla^{k-l}(1/(1+|\xi|^2)^{{s}/{2}})$ and $\nabla^l \left(\beta_0\left({|z|\xi}/{\lambda}\right)\right)$.
We assert that for any multi-index $\alpha$ with $|\alpha|=m$, $D^{\alpha}(1/(1+|\xi|^{2})^{s/2})$ is a linear combination of $\xi^{\beta}/(1+|\xi|^{2})^{{s}/{2}+p}$.
The multi-index $\beta$ and the non-negative integer $p$ satisfy $2p-|\beta|=m$ and $p\geq m/{2}$. For $m=1$,  a simple calculation implies
\begin{equation*}
D^{e_i}\left(\frac{1}{(1+|\xi|^{2})^{s/2}}\right)=-\frac{s\xi_i}{(1+|\xi|^{2})^{s/2+1}},
\end{equation*}
and $2-|e_i|=1$. Assuming the assertion holds for some $m\geq 1$, we consider the case $m+1$. By assumption, we just need to check $D^{e_j}(\xi^{\beta}/(1+|\xi|^{2})^{s/2+p})$, which satisfies $2p-|\beta|=m$ and $p\geq m/2$.
If $\beta_j\geq 1$, we have
\begin{equation*}
D^{e_j}\left(\frac{\xi^{\beta}}{(1+|\xi|^{2})^{s/2+p}}\right)=\frac{\beta_j \xi^{\beta-e_j}}{(1+|\xi|^{2})^{s/2+p}} - 2\left(\frac{s}{2}+p\right)\frac{\xi^{\beta+e_j}}{(1+|\xi|^{2})^{s/2+p+1}}.
\end{equation*}
For the first term, we have $2p-|\beta-e_j|=m+1$ and $p=(m+|\beta|)/2\geq (m+1)/2$. For the second term, we have $2(p+1)-|\beta+e_j|=m+1$ and $p+1\geq (m+1)/2$.
If $\beta_j= 0$, $D^{e_j}(x^{\beta}/(1+|x|^{2})^{s/2+p})$ only contains the second term, which also satisfies assumptions. This completes the induction and proves the assertion.
Noting that
\begin{align*}
\left|\frac{\xi^{\beta}}{(1+|\xi|^{2})^{s/2+p}}\right|&= \frac{1}{(1+|\xi|^{2})^{s/2}}\frac{|\xi^{\beta}|}{(1+|\xi|^{2})^{p}}\\
&\leq \frac{1}{(1+|\xi|^{2})^{s/2}}\frac{|\xi|^{\beta}}{(1+|\xi|^{2})^{(m+|\beta|)/2}}\\
&\lesssim \frac{1}{(1+|\xi|^{2})^{(s+m)/{2}}}.
\end{align*}
Hence, we obtain
\begin{equation}\label{de1}
|\nabla^{k-l}(1/(1+|\xi|^{2})^{s/2})| \lesssim (1+|\xi|^{2})^{(l-s-k)/{2}}.
\end{equation}
Note that the derivative $\nabla^l \beta_0\left({|z|\xi}/{\lambda}\right)$ produces a factor $\left({|z|}/{\lambda}\right)^l$ via chain rule. When $l \geq 1$, it is supported entirely in $\left\{ \xi\in \mathbb{R}^{n}:{1}/{3} \leq \left|{|z|\xi}/{\lambda}\right| \leq {2}/{3}\right\}$, which implies
\begin{equation}\label{xisim}
|\xi|\sim \frac{\lambda}{|z|}\sim \frac{|z|}{t}.
\end{equation}
Thus, we have
\begin{equation}\label{de2}
|\nabla^l \beta_0\left({|z|\xi}/{\lambda}\right)| \lesssim \frac{1}{|\xi|^{l}}.
\end{equation}
Then \eqref{de1} and \eqref{de2} yield
\begin{equation}\label{de3}
|\nabla^k A_0(\xi)| \lesssim \sum_{l=0}^{k}\frac{1}{(1+|\xi|^{2})^{(s+k-l)/{2}}}\frac{1}{|\xi|^{l}}.
\end{equation}
Utilizing \eqref{zsim} and \eqref{de3}, we obtain
\begin{equation}\label{de4}
\frac{\left|\nabla^{2}\phi\right|^{N-k}}{|\nabla \phi|^{2N-k}}\left|\nabla^{k}A_0(\xi)\right| \lesssim \frac{1}{|z|^{N}} \left(\frac{t}{|z|}\right)^{N-k} \sum_{l=0}^{k}\frac{1}{(1+|\xi|^{2})^{(s+k-l)/{2}}}\frac{1}{|\xi|^{l}}.
\end{equation}
Recalling that $z>1$ and \eqref{xisim}, it yields $|\xi| \sim |z|/t > 1$.
It is easy to verify that
\begin{equation}\label{tzsim}
\frac{t}{|z|} \lesssim \frac{1}{(1+|\xi|^{2})^{{1}/{2}}}.
\end{equation}
Indeed, the support condition of $\beta_0$ dictates ${t}/{|z|} \leq {1}/(3|\xi|)$. If $|\xi| \geq 1$, this directly implies the bound. If $|\xi| < 1$, then $(1+|\xi|^{2})^{1/2}<\sqrt{2}$. This implies that
\begin{equation*}
\frac{t}{|z|} \leq \frac{1}{|z|}<1 \lesssim \frac{1}{(1+|\xi|^{2})^{{1}/{2}}}.
\end{equation*}
By \eqref{L*N}, \eqref{de4} and \eqref{tzsim}, we can choose $N>s$ such that
\begin{equation}\label{I01}
\begin{aligned}
|I_0| &\lesssim \frac{1}{|z|^{N}}\int_{\mathbb{R}^{n}} \sum_{k=0}^{N} \sum_{l=0}^{k} \left(\frac{t}{|z|}\right)^{N-k}\frac{1}{(1+|\xi|^{2})^{(s+k-l)/{2}}}\frac{1}{|\xi|^{l}}\mathrm{d}\xi\\
&\lesssim \frac{1}{|z|^{N}} \int_{\mathbb{R}^n} \frac{1}{(1+|\xi|^2)^{(s+N)/{2}}} \mathrm{d}\xi\\
&\lesssim \frac{1}{|z|^{N}} \leq \frac{1}{|z|^{s}}.
\end{aligned}
\end{equation}

For $0<|z|\leq 1$, we split the integral $I_0$ into two parts:
\begin{equation}\label{I02}
|I_0| \leq \left|\int_{|\xi|\leq 1/|z|}e^{i\phi(\xi)} A_0(\xi)\mathrm{d}\xi\right| + \left| \int_{|\xi|>1/|z|} e^{i\phi(\xi)} A_0(\xi) \mathrm{d}\xi \right| = I_{01} + I_{02}.
\end{equation}
Clearly, the first part is bounded by
\begin{equation}\label{I03}
I_{01} \leq \int_{|\xi|\leq 1/|z|}\frac{1}{(1+|\xi|^{2})^{{s}/{2}}}\mathrm{d}\xi \lesssim \int_{0}^{1/|z|} r^{n-s-1} \mathrm{d}r \lesssim |z|^{s-n}.
\end{equation}
Applying the same integration by parts $N$ times argument in the case of $|z|>1$ to the second part $I_{02}$ implies that
\begin{equation}\label{I04}
|I_{02}| \leq \sum_{m=1}^{N} \int_{|\xi|=1/|z|} \left| (L^{\ast})^{m-1} A_0(\xi) \frac{\nabla\phi(\xi)}{|\nabla\phi(\xi)|^2} \right| \mathrm{d}\sigma(\xi) + \int_{|\xi|>1/|z|} \left| (L^{\ast})^N A_0(\xi) \right| \mathrm{d}\xi.
\end{equation}
Based on \eqref{L*N}, \eqref{de4} and $t/|z|\lesssim 1/|\xi|$, we have
\begin{align*}
\left|(L^{\ast})^{m-1} A_0(\xi)\right| &\lesssim \frac{1}{|z|^{m-1}} \sum_{k=0}^{m-1}\sum_{l=0}^{k} \frac{1}{(1+|\xi|^{2})^{(s+m-1-l)/{2}}}\frac{1}{|\xi|^{l}}\\
&\lesssim \frac{1}{|z|^{m-1}} \frac{1}{|\xi|^{s+m-1}}.
\end{align*}
By \eqref{zsim}, we deduce that
\begin{equation}\label{I05}
\begin{aligned}
\sum_{m=1}^{N}\int_{|\xi|=1/|z|}\left|(L^{\ast})^{m-1}A_0(\xi)\frac{\nabla \phi(\xi)}{|\nabla \phi(\xi)|^{2}}\right|\mathrm{d}\sigma(\xi)&\lesssim \sum_{m=1}^{N} \frac{1}{|z|^{n-1}} \frac{1}{|z|^{m-1}}\frac{1}{|z|^{1-(m+s)}}\frac{1}{|z|}\\
&\sim |z|^{s-n}.
\end{aligned}
\end{equation}
Choosing $N>s$, a direct calculation yields
\begin{equation}\label{I06}
\begin{aligned}
\int_{|\xi|>1/|z|} |(L^{\ast})^N A_0(\xi)| \mathrm{d}\xi &\lesssim \frac{1}{|z|^{N}} \int_{|\xi|>1/|z|} \frac{1}{|\xi|^{s+N}} \mathrm{d}\xi\\
&\sim \frac{1}{|z|^{N}} \int_{1/|z|}^\infty r^{n-s-N-1} \mathrm{d}r\\
&\sim |z|^{s-n}.
\end{aligned}
\end{equation}
Collecting \eqref{I01}, \eqref{I02}, \eqref{I03}, \eqref{I04}, \eqref{I05} and \eqref{I06}, it gives
\begin{equation}\label{I0f}
|I_0|\lesssim \frac{\chi_{\{|z|\leq 1\}}}{|z|^{n-s}}+\frac{\chi_{\{|z|>1\}}}{|z|^{s}}.
\end{equation}

Finally, we estimate $I_j$ for $j\geq 1$. Recalling that $\mathrm{supp}\, \beta_j\subset \{\eta \in \mathbb{R}^{n}: |\eta|\sim 2^{j}\geq 2\}$ and $\Phi(\eta)=\omega \cdot \eta-{|\eta|^{2}}/{2}$, we have $|\nabla \Phi(\eta)|=|\omega-\eta|\sim 2^{j}$.
Let the differential operator
\begin{equation*}
\tilde{L}=\frac{\nabla\Phi \cdot \nabla}{i\lambda |\nabla\Phi|^{2}},
\end{equation*}
which satisfies $\tilde{L} e^{i\lambda \Phi(\eta)}=e^{i\lambda \Phi(\eta)}$.
We define
\begin{equation*}
\tilde{L}^{\ast}f = -\nabla \cdot \left(f \frac{\nabla\Phi}{i\lambda |\nabla\Phi|^2}\right),
\end{equation*}
and
\begin{equation*}
A_j(\eta)=\frac{\beta_j(\eta)}{\left(1+\left({\lambda|\eta|}/{|z|}\right)^{2}\right)^{{s}/{2}}}.
\end{equation*}

In the case of $|z|> 1$, applying integration by parts $N$ times similarly to the estimate of $I_0$, \eqref{L*N}, \eqref{de1} and chain rule, we obtain
\begin{align*}
|I_j| &\lesssim \left(\frac{\lambda}{|z|}\right)^n \left|\int_{\mathbb{R}^n} e^{i\lambda \Phi(\eta)}(\tilde{L}^{\ast})^N A_j(\eta) d\eta\right|\\
&\lesssim \left(\frac{\lambda}{|z|}\right)^n \frac{1}{\lambda^{N}}\sum_{k=0}^{N}\int_{\mathbb{R}^{n}}\frac{1}{|\nabla \Phi(\eta)|^{2N-k}} \left|\nabla^{k}A_j(\eta)\right|\mathrm{d}\eta\\
&\lesssim \left(\frac{\lambda}{|z|}\right)^n \frac{1}{\lambda^{N}}\sum_{k=0}^{N}\int_{\mathbb{R}^{n}}\frac{1}{|\nabla \Phi(\eta)|^{2N-k}} \sum_{l=0}^{k}\frac{\left|\nabla^{l}\beta_j(\eta)\right|}{\left(1+\left({\lambda|\eta|}/{|z|}\right)^{2}\right)^{(s+k-l)/{2}}}\left(\frac{\lambda}{|z|}\right)^{k-l}  \mathrm{d}\eta\\
&\lesssim \left(\frac{\lambda}{|z|}\right)^n \frac{1}{\lambda^{N}}\sum_{k=0}^{N}\int_{\mathbb{R}^{n}}\frac{1}{|\nabla \Phi(\eta)|^{2N-k}} \frac{1}{\left(1+\left({\lambda|\eta|}/{|z|}\right)^{2}\right)^{{s}/{2}}}\sum_{l=0}^{k} \frac{1}{2^{j(k-l)}} \left|\nabla^{l}\beta_j(\eta)\right| \mathrm{d}\eta\\
&\lesssim \left(\frac{\lambda}{|z|}\right)^n \frac{1}{(2^{j} \lambda)^N}\min\left\{1,\left(\frac{|z|}{2^{j} \lambda}\right)^{s}\right\} 2^{jn}\sum_{k=0}^{N}\frac{1}{2^{j(N-k)}}\\
&\lesssim \frac{t^{N+s-n}}{2^{j(N+s-n)} |z|^{2N+s-n}}.
\end{align*}
Choosing an integer $N \geq n/2$, we have
\begin{equation}\label{Ij1}
|I_j| \lesssim \frac{1}{2^{j(N+s-n)} |z|^{s}}.
\end{equation}

For the remaining case $|z| \leq 1$, the same argument as in \eqref{I*1}, \eqref{I*2} and \eqref{DA} applies. Taking $k \geq n/2$, we obtain
\begin{align*}
|I_j| &\lesssim \left(\frac{\lambda}{|z|}\right)^n \frac{1}{\lambda^{n/2}} \sum_{|\alpha| \le 2k} \left\|D^\alpha A_j\right\|_{L^{\infty}(\mathbb{R}^n)} \\
&\lesssim \left(\frac{\lambda}{|z|}\right)^n \frac{1}{\lambda^{{n}/{2}}} \min\left\{1,\left(\frac{|z|}{2^{j} \lambda}\right)^{s}\right\} \\
&\sim \frac{1}{t^{{n}/{2}}} \min\left\{1,\left(\frac{|z|}{2^{j} \lambda}\right)^{s}\right\}.
\end{align*}
We further consider two subcases based on $2^{j}\lambda/|z|$. If $2^{j}\lambda/|z|>1$, we have
\begin{equation}\label{Ij2}
|I_j| \lesssim \frac{1}{t^{{n}/{2}}} \left( \frac{t}{2^j|z|} \right)^{s} = \frac{t^{s-\frac{n}{2}}} { 2^{js}|z|^{s}} \lesssim \frac{1}{ 2^{jn/2}|z|^{s}}.
\end{equation}
Conversely, if $2^{j}\lambda/|z|<1$, we obtain
\begin{equation}\label{Ij3}
|I_j| \lesssim \frac{1}{t^{{n}/{2}}} \lesssim \frac{1}{|z|^{s}2^{j{n}/{2}}}.
\end{equation}
By \eqref{Ij1}, \eqref{Ij2} and \eqref{Ij3}, we deduce that for $j\geq 1$,
\begin{equation*}
|I_j| \lesssim \frac{\chi_{\{|z|\leq 1\}}}{|z|^{s}2^{j{n}/{2}}} + \frac{\chi_{\{|z|>1\}}}{|z|^{s}2^{j(N+s-n)}}.
\end{equation*}
Summing over $j \geq 1$ directly yields
\begin{equation}\label{Ijf}
\sum_{j=1}^{\infty}|I_j|\lesssim \frac{1}{|z|^{s}}.
\end{equation}

Combining \eqref{I*f}, \eqref{I0f} and \eqref{Ijf} and recalling $n/2 \leq s < n$, we conclude that
\begin{align*}
\int_{B(y,1)} \sup_{0<t<1} |K_{s,t}(z)|\mathrm{d}z &\lesssim \int_{B(y,1)}\left(\frac{1}{|z|^{s}}+\frac{1}{|z|^{n-s}}\right)\chi_{\{|z|\leq 1\}}\mathrm{d}z+\int_{B(y,1)}\frac{\chi_{\{|z|>1\}}}{|z|^{s}}\mathrm{d}z\\
&\lesssim \int_{0}^{1} r^{n-1-s}+r^{s-1} \mathrm{d}r + \int_{B(y,1)} \mathrm{d}z\\
&\lesssim 1.
\end{align*}
This establishes the desired bound of \eqref{k1}.

We must point out that $K_s^{\ast}$ is integrable on $\mathbb{R}^{n}$ for all $s>n$. Indeed, for each $|x|\leq 1$ and $t\in (0,1)$, we have
\begin{equation*}
|K_{s,t}(x)|\lesssim \int_{\mathbb{R}^{n}} \frac{1}{(1+|\xi|^{2})^{{s}/{2}}}\mathrm{d}\xi \lesssim 1.
\end{equation*}
Similar to the discussion above, we obtain $|K_s^{\ast}(x)|\lesssim 1/|x|^{s}$ for $|x|>1$. Thus, the conclusion holds.

\textbf{Step 2.} We linearize the maximal operator $S^{\ast}f_s=\sup_{0<t<1}\left|e^{it\Delta}\mathcal{J}^{-s}f_s\right|$.

Let $t(\cdot)$ be an arbitrary Lebesgue measurable function from $B(0,1)$ to $(0,1)$, we define the linearized operator family $T_z$ acting on Schwartz functions $f_s\in \mathcal{S}(\mathbb{R}^{n})$ by
\begin{equation*}
T_z f_s(x)=e^{it(x)\Delta}\mathcal{J}^{-z}f_s(x)=\left. e^{it\Delta}\mathcal{J}^{-z}f_s(x)\right|_{t=t(x)}.
\end{equation*}
We consider $z$ within the complex strips defined by the critical regularity indices. Clearly, the map $z\mapsto \int_{B(0,1)}(T_zf_s(x))g(x)dx$ is analytic on the interior of these strips and continuous on the boundary for any Schwartz functions $f_s$, $g$.

\textbf{Step 3.} Denote $c_n=n/(2n+2)$. We prove that for $\mathrm{Re}\, z=c_n+\varepsilon_1$, $T_{c_n+\varepsilon_1+iy}$ maps $L^{2}(\mathbb{R}^{n})$ to $L^{2}(B(0,1))$ with constant $M_0(y)=C_0$, which is independent of $y$ and $t(x)$.

Invoking the result of Kenig, Ponce and Vega \cite[Theorem 2.5]{KPV}, Du, Guth and Li \cite[Theorem 1.1]{DGL} and Du and Zhang \cite[Theorem 1.2]{DZ}, we have
\begin{equation*}
\left\| \sup_{0<t<1}\left|e^{it\Delta}f\right|\right\|_{L^{2}(B(0,1))}\lesssim \|\mathcal{J}^{c_n+\varepsilon_1}f\|_{L^{2}(\mathbb{R}^{n})}.
\end{equation*}
Substituting $f$ by $\mathcal{J}^{-c_n-\varepsilon_1-iy}f_s$, we get
\begin{equation*}
\|T_{c_n+\varepsilon_1+iy}f_s\|_{L^{2}(B(0,1))}\lesssim \left\| \sup_{0<t<1}\left|e^{it\Delta}\mathcal{J}^{-c_n-\varepsilon_1-iy}f_s\right|\right\|_{L^{2}(B(0,1))}\lesssim \|\mathcal{J}^{-iy}f_s\|_{L^{2}(\mathbb{R}^{n})}.
\end{equation*}
Note that $\mathcal{J}^{-iy}$ coincides with a Fourier multiplier operator with $(1+|\cdot|^{2})^{-iy/2}$, whose modulus is $1$. Applying Plancherel's theorem, we have
\begin{equation*}
\|T_{c_n+\varepsilon_1+iy}f_s\|_{L^{2}(B(0,1))}\lesssim \|f_s\|_{L^{2}(\mathbb{R}^{n})}.
\end{equation*}
Thus, we can set $M_0(y)=C_0$ for all $y\in \mathbb{R}^{n}$.

\textbf{Step 4.} We establish the endpoint bound for $p=1$. Specifically, we show that for $\mathrm{Re}\, z= n/2$, $T_{n/2+iy}$ maps $L^{1}(\mathbb{R}^{n})$ to $L^{1}(B(0,1))$.

We consider the $L^{1}(\mathbb{R}^{n})$ endpoint at $\mathrm{Re}\, z=n/2$. Following the careful analysis in Step 1, replacing $s$ with $n/2+iy$, the derivatives of $1/(1+|\cdot|^{2})^{(n/2+iy)/2}$ will produce polynomials in $y$.
Specifically, differentiating $1/(1+|\cdot|^{2})^{(n/2+iy)/2}$ up to $N$ times introduces a growth factor of order $(1+|y|)^{N}$. Tracking this through our arguments yields
\begin{equation*}
\|T_{n/2+iy}f_s\|_{L^{1}(B(0,1))}\lesssim (1+|y|)^{N}\|f_s\|_{L^{1}(\mathbb{R}^{n})}.
\end{equation*}
Thus, the operator norm is bounded by $M_1(y)=C_1(1+|y|)^{N}$.

\textbf{Step 5.} We complete the proof by applying Proposition \ref{interpolation}.

The bounds $M_0(y)=C_0$ and $M_1(y)=C_1(1+|y|)^{N}$ clearly satisfy the admissible growth condition $\sup_y e^{-\tau|y|}\log M_j(y)<\infty$ for any $0<\tau<\pi$ and $j=0,1$. We interpolate between the $L^{1}(\mathbb{R}^{n})\to L^{1}(B(0,1))$ boundary at $\mathrm{Re}\, z=n/2$
and the $L^{2}(\mathbb{R}^{n})\to L^{2}(B(0,1))$ boundary at $\mathrm{Re}\, z=c_n+\varepsilon_1$. Fixed target space $L^{p}$ for $p\in [1,2]$, we determine the interpolation parameter $\theta\in [0,1]$ satisfying
\begin{equation*}
\frac{1}{p}=\frac{1-\theta}{2}+\frac{\theta}{1}=\frac{1+\theta}{2}.
\end{equation*}
This yields that $\theta=2/p-1$. By proposition \ref{interpolation}, the operator $T_{z_\theta}$ is bounded from $L^{p}(\mathbb{R}^{n})$ to $L^{p}(B(0,1))$ at the point $z_\theta=(1-\theta)(c_n+\varepsilon_1+iy_1)+\theta(n/2+iy_0)$. Let $\varepsilon_1\to 0$.
The required regularity index $s(p)$ corresponds to the real part of $z_\theta$:
\begin{equation*}
s(p)=\mathrm{Re}\, z_\theta=\left(1-\theta\right)\frac{n}{2(n+1)}+\theta\frac{n}{2}=n\left(\frac{1}{p}-\frac{1}{2}\right)+\frac{n}{n+1}\left(1-\frac{1}{p}\right).
\end{equation*}

Ultimately, we have shown that for any $s>s(p)$, $p\in[1,2]$ and any measurable function $t(x)$, the linearized operator satisfies
\begin{equation*}
\left\|e^{it(x)\Delta}\mathcal{J}^{-s}f_s\right\|_{L^{p}(B(0,1))}\lesssim \|f_s\|_{L^{p}(\mathbb{R}^{n})}.
\end{equation*}
Thus, we can take the supremum over all $t\in(0,1)$ inside the $L^{p}$ norm. By selecting $t(x)$ to approximate the pointwise supremum, we recover the bound for the nonlinear maximal operator:
\begin{equation*}
\left\|\sup_{0<t<1}\left|e^{it\Delta}\mathcal{J}^{-s}f_s\right|\right\|_{L^{p}(B(0,1))}\lesssim \|f_s\|_{L^{p}(\mathbb{R}^{n})},
\end{equation*}
which is equivalent to \eqref{conv1}. This completes the proof by density of Schwartz functions in $L_s^{p}(\mathbb{R}^{n})$.
\end{proof}

\begin{remark}
Regarding the one-dimensional case, the $L^2$ estimate of Kenig, Ponce, and Vega \cite[Theorem 2.5]{KPV} holds exactly at the endpoint $s = c_1 = 1/4$.
Therefore, we can set $\varepsilon_1 = 0$ in Step 3. Consequently, the complex interpolation in Step 5 directly yields the endpoint estimate $s \geq 1/(2p)$ for all $1 \leq p \leq 2$, which coincides with the statement in Theorem \ref{t1}.
\end{remark}

\begin{proof}[Proof of Theorem \ref{infty_s}]
We rewrite the solution to the Schr\"odinger equation at $t_0$ and $x=0$ as a convolution:
\begin{equation}\label{conv_rep}
e^{it_0\Delta}f(0) = (K_{s,t_0} \ast f_s)(0) = \int_{\mathbb{R}^n} K_{s,t_0}(-y) f_s(y) \mathrm{d}y,
\end{equation}
where the kernel is given by
\begin{equation*}
K_{s,t_0}(-y) = \frac{1}{(2\pi)^{n}} \int_{\mathbb{R}^n} \frac{e^{-i(y \cdot \xi + t_0|\xi|^2)}}{(1+|\xi|^2)^{{s}/{2}}} \mathrm{d}\xi.
\end{equation*}
Making the translation $u = {2t_0\xi}/{|y|} + {y}/{|y|}$, the integral becomes
\begin{equation}\label{Kt0}
K_{s,t_0}(-y) = \frac{1}{(2\pi)^{n}} e^{i\lambda} \left(\frac{\lambda}{t_0}\right)^{(n-s)/2} \int_{\mathbb{R}^n} \frac{e^{-i\lambda |u|^2}}{ \left({t_0}/{\lambda} + |u - \omega|^2 \right)^{{s}/{2}}} \mathrm{d}u,
\end{equation}
where $\lambda={|y|^{2}}/(4t_0)$ and $\omega={y}/{|y|}$.
We introduce a smooth, radial cutoff function $\psi \in C_0^\infty(\mathbb{R}^n)$ satisfying $\psi(u) = 1$ for $|u| \leq \varepsilon$ and $\psi(u) = 0$ for $|u| > 2\varepsilon$, where $\varepsilon\ll 1$. Denoting
\begin{equation*}
A(u)=\frac{1}{\left( {t_0}/{\lambda} + |u - \omega|^2 \right)^{{s}/{2}}},
\end{equation*}
the integral in \eqref{Kt0} can be split into two parts:
\begin{equation}\label{IID}
\begin{aligned}
\int_{\mathbb{R}^n} e^{-i\lambda |u|^2} A(u) \mathrm{d}u &= \int_{\mathbb{R}^n} e^{-i\lambda |u|^2} A(u) \psi(u) \mathrm{d}u + \int_{\mathbb{R}^n} e^{-i\lambda |u|^2} A(u) (1-\psi(u)) \mathrm{d}u\\
&:= I_1 + I_2.
\end{aligned}
\end{equation}

For $I_1$, we denote $\phi(u) = -|u|^2$, which satisfies $\phi(0)=0$. The origin $u=0$ is a nondegenerate critical point, and its Hessian matrix $\nabla^2\phi(0) = -2I_{n\times n}$ has eigenvalues $\mu_1 = \dots = \mu_n = -2$.
Applying Proposition \ref{hnde}, we have
\begin{equation*}
I_1 = \frac{a_0}{\lambda^{{n}/{2}}}  + O\left(\frac{1}{\lambda^{{n}/{2}+1}}\right), \qquad \text{as}\,\, \lambda \to \infty.
\end{equation*}
By Taylor expansion at $\lambda\to \infty$, $a_0$ is given by
\begin{equation*}
a_0 = A(0)(2\pi)^{{n}/{2}} \prod_{j=1}^n \frac{1}{e^{3\pi i/4}\mu_j^{1/2}}= \frac{\pi^{{n}/{2}} }{e^{i{\pi n}/{4}}\left( {t_0}/{\lambda}+1 \right)^{s/2}} =\frac{\pi^{{n}/{2}}} {e^{i{\pi n}/{4}}}+O\left(\frac{1}{\lambda}\right).
\end{equation*}
Hence, we can write
\begin{equation}\label{II1}
I_1=\frac{\pi^{{n}/{2}} }{e^{i{\pi n}/{4}}\lambda^{n/2}}+O\left(\frac{1}{\lambda^{n/2+1}}\right).
\end{equation}

For $I_2$, we define the differential operator
\begin{equation*}
L = \frac{i u}{2\lambda |u|^2} \cdot \nabla,
\end{equation*}
which satisfies $L e^{-i\lambda|u|^{2}}= e^{-i\lambda|u|^{2}}$. Let
\begin{equation*}
L^{\ast}f=-\nabla \cdot \left(\frac{i u}{2\lambda |u|^2} f\right).
\end{equation*}
Choosing an integer $N > \max\{s, n-s\}$, and applying integration by parts $N$ times similar to the discussion of $I_0$ in proof of Theorem \ref{t1}, \eqref{L*N} and \eqref{de1}, we have
\begin{equation}\label{II2D}
\begin{aligned}
|I_2| &=\left|\int_{\mathbb{R}^{n}} e^{-i\lambda |u|^{2}}\left(L^{\ast}\right)^{N}\left(A(u)(1-\psi(u))\right)\mathrm{d}u\right|\\
&\lesssim \frac{1}{\lambda^{N}}\int_{\mathbb{R}^{n}} \sum_{k=0}^{N} \frac{1}{|u|^{2N-k}}\sum_{l=0}^{k} \frac{\left|\nabla^{l}(1-\psi(u))\right|}{\left({t_0}/{\lambda}+|u-\omega|^{2}\right)^{(s+k-l)/2}}\mathrm{d}u\\
&=\frac{1}{\lambda^{N}}\int_{\mathbb{R}^{n}} \sum_{k=1}^{N} \frac{1}{|u|^{2N-k}}\sum_{l=1}^{k} \frac{\left|\nabla^{l}(1-\psi(u))\right|}{\left({t_0}/{\lambda}+|u-\omega|^{2}\right)^{(s+k-l)/2}}\mathrm{d}u\\
&+\frac{1}{\lambda^{N}}\sum_{k=0}^{N} \int_{\mathbb{R}^{n}} \frac{1}{|u|^{2N-k}}\frac{|1-\psi(u)|}{\left({t_0}/{\lambda}+|u-\omega|^{2}\right)^{(s+k)/2}}\mathrm{d}u\\
&:=J_1+J_2.
\end{aligned}
\end{equation}
Note that $\nabla^{l}(1-\psi)$ does not vanish only on $\{u\in \mathbb{R}^{n}: \varepsilon\leq |u|\leq 2\varepsilon\}$ for every $l\geq 1$. In this case, there exist a constant $c_0>0$ such that
\begin{equation*}
|u-\omega|\geq |\omega|-|u|\geq 1-2\varepsilon>c_0.
\end{equation*}
Thus, we have
\begin{equation}\label{II21}
\begin{aligned}
J_1&=\frac{1}{\lambda^{N}}\sum_{k=1}^{N}\sum_{l=1}^{k} \int_{\varepsilon\leq |u|\leq 2\varepsilon} \frac{1}{|u|^{2N-k}} \frac{\left|\nabla^{l}(1-\psi(u))\right|}{\left({t_0}/{\lambda}+|u-\omega|^{2}\right)^{(s+k-l)/2}} \mathrm{d}u\\
&\lesssim \frac{1}{\lambda^{N}} \sum_{k=1}^{N}\sum_{l=1}^{k} \int_{\varepsilon\leq |u|\leq 2\varepsilon} \frac{1}{|u|^{2N-k}} |u-\omega|^{l-s-k} \mathrm{d}u\\
&\leq \frac{1}{\lambda^{N}} \sum_{k=1}^{N} \int_{\varepsilon\leq |u|\leq 2\varepsilon} \frac{1}{|u|^{2N-k}} \sum_{l=1}^{k} c_0^{l-s-k} \mathrm{d}u\\
&\sim \frac{1}{\lambda^{N}}.
\end{aligned}
\end{equation}
For $J_2$, we separate into two parts:
\begin{equation}\label{II22}
\begin{aligned}
J_2&=\frac{1}{\lambda^{N}}\sum_{k=0}^{N} \int_{|u-\omega|>1/2} \frac{1}{|u|^{2N-k}}\frac{|1-\psi(u)|}{\left({t_0}/{\lambda}+|u-\omega|^{2}\right)^{(s+k)/2}}\mathrm{d}u\\
&+\frac{1}{\lambda^{N}}\sum_{k=0}^{N} \int_{|u-\omega|\leq 1/2} \frac{1}{|u|^{2N-k}}\frac{|1-\psi(u)|}{\left({t_0}/{\lambda}+|u-\omega|^{2}\right)^{(s+k)/2}}\mathrm{d}u\\
&:=J_{21}+J_{22}.
\end{aligned}
\end{equation}
Recalling that $\mathrm{supp}\, (1-\psi)=\{u\in \mathbb{R}^{n}: |u|> \varepsilon\}$, we have
\begin{equation}\label{II23}
J_{21}\lesssim \frac{1}{\lambda^{N}} \sum_{k=0}^{N} \int_{\substack{|u-\omega|>1/2\\ |u|>\varepsilon}} \frac{2^{s+k}}{|u|^{2N-k}} \mathrm{d}u \lesssim \frac{1}{\lambda^{N}}.
\end{equation}
In the case of $|u-\omega|\leq 1/2$, we have $1/2\geq |u-\omega|\geq |\omega|-|u|$, which yields $|u|\geq 1/2$. Select $\lambda\gg 1$ such that $t_0/\lambda+|u-\omega|^{2}<1$. Since $N>n-s$, we deduce that
\begin{equation}\label{II24}
\begin{aligned}
J_{22}&\lesssim \frac{1}{\lambda^{N}} \sum_{k=0}^{N}\int_{|u-\omega|\leq 1/2} \frac{2^{2N-k}}{\left({t_0}/{\lambda}+|u-\omega|^{2}\right)^{(s+k)/2}}\mathrm{d}u \\
&\lesssim \frac{1}{\lambda^{N}} \int_{|u-\omega|\leq 1/2} \frac{1}{\left({t_0}/{\lambda}+|u-\omega|^{2}\right)^{(s+N)/2}} \mathrm{d}u\\
&\sim \frac{1}{\lambda^{N}} \int_{0}^{1/2}\frac{r^{n-1}}{\left({t_0}/{\lambda}+r^{2}\right)^{(s+N)/2}} \mathrm{d}r\\
&=\frac{1}{\lambda^{N}} \left(\frac{t_0}{\lambda}\right)^{(n-s-N)/2} \int_{0}^{\sqrt{\lambda}/(2\sqrt{t_0})} \frac{\rho^{n-1}}{(1+\rho^{2})^{(s+N)/2}}\mathrm{d}\rho\\
&\sim \frac{1}{\lambda^{(n-s+N)/2}}.
\end{aligned}
\end{equation}
Combining \eqref{II2D}, \eqref{II21}, \eqref{II22}, \eqref{II23} and \eqref{II24}, we conclude that
\begin{equation}\label{II2f}
|I_2| \lesssim \frac{1}{\lambda^{N}} + \frac{1}{\lambda^{(n-s+N)/2}}\lesssim \frac{1}{\lambda^{(n-s+N)/2}}, \qquad \text{as}\,\, \lambda \to \infty.
\end{equation}

Recalling $\lambda = {|y|^2}/(4t_0)$, we deduce from \eqref{Kt0}, \eqref{IID}, and \eqref{II1} that
\begin{equation*}
K_{s,t_0}(-y) =c_1 \frac{e^{i|y|^{2}/(4t_0)}}{|y|^{s}}+c_2e^{i|y|^{2}/(4t_0)}\left(O\left(\frac{1}{|y|^{s+2}}\right)+|y|^{n-s}I_2\right),
\end{equation*}
where $c_1 = 2^{s-n}t_0^{s-n/2}/(\pi^{n/2}e^{i\pi n/4})$, and $c_2=2^{s-2n}t_0^{s-n}/\pi^{n}$.
Now, we define
\begin{equation*}
f_s(y) =
\begin{cases}
e^{-i{|y|^2}/(4t_0)} \overline{\operatorname{sgn}(c_1)}, & \text{if } |y| > R,\\
0, & \text{if } |y| \leq R,
\end{cases}
\end{equation*}
where $R \gg 1$ . Clearly, $\|f_s\|_{L^\infty(\mathbb{R}^{n})} = 1$.

Substituting $f_s$ into \eqref{conv_rep}, and by \eqref{II2f},  we deduce that
\begin{align*}
\left| \int_{\mathbb{R}^n} K_{s,t_0}(-y) f_s(y) \mathrm{d}y \right|
&= \left| \int_{|y| \geq R} \frac{|c_1|}{|y|^{s}}+c_2\overline{\operatorname{sgn}(c_1)}\left(O\left(\frac{1}{|y|^{s+2}}\right)+|y|^{n-s}I_2\right) \mathrm{d}y \right| \\
&\gtrsim \int_{|y| \geq R} \frac{1}{|y|^{s}} \mathrm{d}y - \int_{|y| \geq R} O\left(\frac{1}{|y|^{s+2}}\right) \mathrm{d}y-\int_{|y| \geq R} |y|^{n-s}|I_2| \mathrm{d}y\\
&\gtrsim \int_{|y| \geq R} \frac{1}{|y|^{s}} \mathrm{d}y - \int_{|y| \geq R} \frac{1}{|y|^{s+2}} \mathrm{d}y-\int_{|y| \geq R} \frac{1}{|y|^{N}} \mathrm{d}y\\
&= \int_{|y| \geq R} \frac{1}{|y|^{s}}\left(1-\frac{1}{|y|^{2}}-\frac{1}{|y|^{N-s}}\right) \mathrm{d}y.
\end{align*}
Since $N>s$, we can choose $R$ large enough such that $1-1/|y|^{2}-1/|y|^{N-s} \geq 1/2$ for all $|y| \geq R$, we obtain
\begin{equation*}
\left| \int_{\mathbb{R}^n} K_{s,t_0}(-y) f_s(y) \mathrm{d}y \right| \gtrsim \int_{|y| \geq R} \frac{1}{|y|^{s}} \mathrm{d}y \sim \int_{R}^{\infty} r^{n-1-s} \mathrm{d}r.
\end{equation*}
Since $s \leq n$, the above integral diverges, which completes the proof.
\end{proof}

\begin{proof}[Proof of Theorem \ref{p_s}]
Assume \eqref{p>2} hold for $p\geq 2$, $s<n(1-2/p)$ and $f\in L_s^{p}(\mathbb{R}^{n})$. Letting $\lambda\gg 1$ and selecting a smooth, radial function $\varphi \in C_c^{\infty}(\mathbb{R}^{n})$ which is supported on $\{\xi\in \mathbb{R}^{n}: 1/2\leq |\xi|\leq 2\}$, we define
\begin{equation*}
\widehat{f_\lambda}(\xi)=e^{i|\xi|^{2}/2}\varphi\left(\frac{\xi}{\lambda}\right).
\end{equation*}
Noting that $\mathrm{supp}\, \varphi(\cdot/\lambda)\subset \{\xi\in \mathbb{R}^{n}: |\xi|\sim \lambda\}$, we obtain
\begin{equation}\label{ee1}
\|f_\lambda\|_{L^{p}_s(\mathbb{R}^{n})}\sim \lambda^{s}\|f_{\lambda}\|_{L^{p}(\mathbb{R}^{n})}.
\end{equation}
Denoting $g_\lambda(x)=(\varphi(\cdot/\lambda))^{\vee}(x)=\lambda^{n}{\varphi}^{\vee}(\lambda x)$, we write
\begin{equation*}
f_\lambda(x)=\frac{1}{(2\pi)^{n}}\int_{\mathbb{R}^{n}}e^{ix\cdot \xi}e^{i|\xi|^{2}/2}\varphi\left(\frac{\xi}{\lambda}\right)d\xi=e^{-i\Delta/2}g_\lambda(x).
\end{equation*}
For $|x|\leq 3\lambda$, by the dispersive estimate, it follows
\begin{equation}\label{ee2}
|f_\lambda(x)|=|e^{-i\Delta/2}g_\lambda(x)|\lesssim \|g_\lambda\|_{L^{1}(\mathbb{R}^{n})}=\|\varphi^{\vee}\|_{L^{1}(\mathbb{R}^{n})}\lesssim 1.
\end{equation}
For $|x|>3\lambda$, making change of $\xi/\lambda=u$, we have
\begin{equation*}
f_\lambda(x)=\frac{\lambda^{n}}{(2\pi)^{n}}\int_{\mathbb{R}^{n}}e^{i\lambda^{2}(2u/\lambda+|u|^{2}/2)}\varphi(u) \mathrm{d}u:=\frac{\lambda^{n}}{(2\pi)^{n}}\int_{\mathbb{R}^{n}}e^{i\lambda^{2}\psi(u)}\varphi(u) \mathrm{d}u.
\end{equation*}
Let the differential operator $L$ be defined as
\begin{equation*}
L=\frac{\nabla \psi \cdot \nabla}{i\lambda^{2}|\nabla \psi|^{2}},
\end{equation*}
and let
\begin{equation*}
L^{\ast}f=-\nabla \cdot \left(\frac{\nabla \psi}{i\lambda^{2}|\nabla \psi|^{2}}f\right).
\end{equation*}
Note that $L e^{i\lambda \psi}=e^{i\lambda \psi}$. Picking an integer $N>n(p+1)/(2p)$ and applying integration by parts $N$ times similar to the argument of $I_0$ in proof of Theorem \ref{t1} and \eqref{L*N}, we deduce that
\begin{equation}\label{ee3}
\begin{aligned}
|f_\lambda(x)|&=\frac{\lambda^{n}}{(2\pi)^{n}}\left|\int_{\mathbb{R}^{n}}e^{i\lambda^{2}\psi(u)}\left(L^{\ast}\right)^{N}\varphi(u)\mathrm{d}u\right|\\
&\lesssim \lambda^{n-2N}\int_{\mathbb{R}^{n}} \sum_{k=0}^{N}\frac{1}{|\nabla \psi(u)|^{2N-k}}|\nabla^{k} \varphi(u)|\mathrm{d}u\\
&\leq \lambda^{n-2N}\int_{\mathbb{R}^{n}} \sum_{k=0}^{N}\frac{1}{(x/\lambda)^{2N-k}}|\nabla^{k} \varphi(u)|\mathrm{d}u\\
&\lesssim \frac{\lambda^{n-N}}{|x|^{N}}.
\end{aligned}
\end{equation}
Combining \eqref{ee2} and \eqref{ee3}, we have
\begin{align*}
\|f_\lambda\|_{L^{p}(\mathbb{R}^{n})}^{p}&=\int_{|x|\leq 3\lambda} |f_\lambda(x)|^{p} \mathrm{d}x + \int_{|x|>3\lambda} |f_\lambda(x)|^{p} \mathrm{d}x\\
&\lesssim \int_{|x|\leq 3\lambda} \mathrm{d}x+ \int_{|x|>3\lambda} \frac{\lambda^{(n-N)p}}{|x|^{Np}} \mathrm{d}x\\
&\sim \lambda^{n}+\lambda^{np-2Np+n}\int_{|x|>3} \frac{1}{|x|^{Np}}\mathrm{d}x\\
&\lesssim \lambda^{n}.
\end{align*}
Recalling \eqref{ee1}, we obtain
\begin{equation}\label{ee4}
\|f_\lambda\|_{L^{p}_s(\mathbb{R}^{n})}\lesssim \lambda^{s+n/p}.
\end{equation}

A direct calculation yields that
\begin{equation*}
e^{i\Delta/2}f_\lambda(x)=\frac{1}{(2\pi)^{n}}\int_{\mathbb{R}^{n}}e^{ix\cdot \xi}\varphi\left(\frac{\xi}{\lambda}\right)\mathrm{d}\xi=\lambda^{n} \varphi^{\vee}(\lambda x).
\end{equation*}
Since $\varphi^{\vee}(0)>0$, by continuity, there exists a constant $c>0$, such that $|\varphi^{\vee}(\lambda x)|\geq c>0$ for all $|x|\lesssim \lambda^{-1}$.
Denote $\alpha_0=c\lambda^{n}/2$. For any $x$ satisfying $|x|\lesssim \lambda^{-1}$, we have
\begin{equation*}
\lambda^{n}|\varphi^{\vee}(\lambda x)|\geq c\lambda^{n}>\alpha_0.
\end{equation*}
Recalling $\lambda\gg 1$, it clearly follows that $\{x\in \mathbb{R}^{n}:|x|\lesssim 1/\lambda\}\subset B(0,1)$. Hence, we deduce that
\begin{equation*}
\left\{x\in \mathbb{R}^{n}:|x|\lesssim \lambda^{-1}\right\}\subset \left\{x\in B(0,1): |\lambda^{n} \varphi^{\vee}(\lambda x)|>\alpha_0\right\}.
\end{equation*}
This yields that
\begin{equation}\label{ee5}
\begin{aligned}
\left\|\sup_{0<t<1}\left|e^{it\Delta}f_\lambda\right|\right\|_{L^{p,\infty}(B(0,1))}&\geq \|\lambda^{n}\varphi^{\vee}(\lambda \cdot)\|_{L^{p,\infty}(B(0,1))}\\
&=\sup_{\alpha>0} \alpha \left|\left\{x\in B(0,1): |\lambda^{n} \varphi^{\vee}(\lambda x)|>\alpha\right\}\right|^{1/p}\\
&\geq \alpha_0 \left|\left\{x\in B(0,1): |\lambda^{n} \varphi^{\vee}(\lambda x)|>\alpha_0\right\}\right|^{1/p}\\
&\geq \alpha_0 \left|\left\{x\in \mathbb{R}^{n}:|x|\lesssim \lambda^{-1}\right\}\right|^{1/p}\\
&\sim \lambda^{n(1-1/p)}.
\end{aligned}
\end{equation}
By \eqref{ee4} and \eqref{ee5}, we conclude that
\begin{equation*}
\lambda^{n(1-1/p)}\lesssim \left\|\sup_{0<t<1}\left|e^{it\Delta}f_\lambda\right|\right\|_{L^{p,\infty}(B(0,1))}\lesssim \|f_\lambda\|_{L^{p}_s(\mathbb{R}^{n})}\lesssim \lambda^{s+n/p}.
\end{equation*}
Since $\lambda\gg 1$, we deduce that $s\geq n(1-2/p)$, which contradicts the assumption. This completes the proof.
\end{proof}

To prove Theorem \ref{p<2}, we need the following lemma.

\begin{lemma}\label{lem:s1t1}
$\mathrm{(i)}$ Suppose $\kappa>0$ and $f\in \mathcal{S}(\mathbb{R})$ satisfies $\mathrm{supp}\,\widehat{f}\subseteq [-\kappa,\kappa]$. Then
\[
\left| \int_{\mathbb{R}}f(x) \mathrm{d}x \right|\leq \frac{\kappa^2}{2} \int_{\mathbb{R}}|f(x)| x^2 \mathrm{d}x.
\]
$\mathrm{(ii)}$ There exist some $\kappa_0 >2\sqrt{2}$ and $\phi \in \mathcal{S}(\mathbb{R})$ satisfying
$\mathrm{supp}\,\widehat{\phi}\subseteq [-\kappa_0,\kappa_0]$ and
\[
4\int_{\mathbb{R}}|\phi(x)| x^2 \mathrm{d}x < \left| \int_{\mathbb{R}}\phi(x) \mathrm{d}x \right|.
\]
\end{lemma}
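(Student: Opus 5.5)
For part (i) I would work on the Fourier side, since $\int_{\mathbb R} f(x)\,\mathrm dx=\widehat f(0)$. Because $f\in\mathcal S(\mathbb R)$, $\widehat f$ is smooth. Since $\mathrm{supp}\,\widehat f\subseteq[-\kappa,\kappa]$, it vanishes identically outside this interval, so $\widehat f(\pm\kappa)=0$.

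Taylor's formula with integral remainder about $0$, evaluated at $\xi=\kappa$ and at $\xi=-\kappa$, gives
\begin{equation*}
0=\widehat f(0)+\kappa\widehat f'(0)+\int_0^\kappa(\kappa-\xi)\widehat f''(\xi)\,\mathrm d\xi,
\qquad
0=\widehat f(0)-\kappa\widehat f'(0)+\int_{-\kappa}^0(\kappa+\xi)\widehat f''(\xi)\,\mathrm d\xi .
\end{equation*}
Adding the two identities cancels the first-derivative term:
\begin{equation*}
2\widehat f(0)=-\int_{-\kappa}^{\kappa}(\kappa-|\xi|)\,\widehat f''(\xi)\,\mathrm d\xi .
\end{equation*}
Differentiating under the integral sign, $\widehat f''(\xi)=-\int_{\mathbb R} x^2 e^{-ix\xi}f(x)\,\mathrm dx$, so $\|\widehat f''\|_{L^\infty}\le \int_{\mathbb R}|f(x)|x^2\,\mathrm dx$. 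Since $\int_{-\kappa}^{\kappa}(\kappa-|\xi|)\,\mathrm d\xi=\kappa^2$, we get $|\widehat f(0)|\le \frac{\kappa^2}{2}\int_{\mathbb R}|f|x^2\,\mathrm dx$, which is (i). There is no real obstacle here; the only point to get right is cancelling the first-order term by symmetrizing.

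For part (ii) I would use a scaling argument. Fix any $\psi\in\mathcal S(\mathbb R)$ with $\widehat\psi\in C_c^\infty$, say $\mathrm{supp}\,\widehat\psi\subseteq[-\kappa_1,\kappa_1]$, and $\widehat\psi(0)=\int\psi\ne 0$. For instance, take $\widehat\psi$ a nonnegative bump that is positive at $0$. For $M>0$ set $\phi(x)=\psi(Mx)$. Then
\begin{equation*}
\widehat\phi(\xi)=M^{-1}\widehat\psi(\xi/M),
\end{equation*}
so $\mathrm{supp}\,\widehat\phi\subseteq[-M\kappa_1,M\kappa_1]$, and the change of variables $y=Mx$ gives
\begin{equation*}
\int\phi=M^{-1}\int\psi,\qquad \int|\phi(x)|x^2\,\mathrm dx=M^{-3}\int|\psi(y)|y^2\,\mathrm dy .
\end{equation*}
Hence the ratio $4\int|\phi|x^2\big/\left|\int\phi\right|$ equals a finite constant (finite because $\psi$ is Schwartz) times $M^{-2}$. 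Taking $M$ large makes this ratio smaller than $1$ and also makes $\kappa_0:=M\kappa_1>2\sqrt2$. This $\phi$ and $\kappa_0$ satisfy (ii).

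The consistency with (i) is worth noting: (i) forces $\kappa_0^2/2>4$, which is exactly why the statement requires $\kappa_0>2\sqrt2$.
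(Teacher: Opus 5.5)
Your proof is correct and follows essentially the paper's route. In (i), adding the Taylor expansions at $\pm\kappa$ is exactly the paper's passage to the even part $f_e$, which kills $\widehat{f_e}'(0)$; it is then closed by the same bound $\|\widehat f''\|_{L^\infty}\le\int|f|x^2$. In (ii) you use the same dilation argument: the paper normalizes $\phi_\kappa(y)=\kappa\psi(\kappa y)$ and deduces $\kappa_0>2\sqrt2$ from (i), while you take $M$ large. Your integral-remainder form is slightly more careful than the paper's Lagrange-remainder step, which is not directly valid when $\widehat{f_e}$ is complex-valued.
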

\begin{proof}
  $\mathrm{(i)}$ Let $f_e (x)=(f(x)+f(-x))/2$. Then $\widehat{f_e}$ is an even Schwartz function satisfying $\widehat{f_e}'(0)=0$. By Taylor's formula, there exists $c\in (0,\kappa)$ such that
  \begin{align*}
    \widehat{f_e}(\kappa) & =\widehat{f_e}(0)+ \kappa\widehat{f_e}'(0)+\frac{\kappa^2}{2}\widehat{f_e}''(c)\\
                      & =\widehat{f_e}(0)+ \frac{\kappa^2}{2}\widehat{f_e}''(c).
  \end{align*}
  It follows from $\widehat{f_e}(\kappa)=0$ that
  \begin{align*}
    \left| \int_{\mathbb{R}}f(x) \mathrm{d}x \right| & =|\widehat{f_e}(0)|= \frac{\kappa^2}{2}|\widehat{f_e}''(c)|\\
     & \leq  \frac{\kappa^2}{2}\sup_{\xi\in \mathbb{R}}\left| \int_{\mathbb{R}}f_e (x) x^2 e^{-ix\xi} \mathrm{d}x \right|\\
     & \leq  \frac{\kappa^2}{2}\int_{\mathbb{R}} |f_e (x)| x^2 \mathrm{d}x\\
     & \leq \frac{\kappa^2}{2}\int_{\mathbb{R}} |f (x)| x^2 \mathrm{d}x.
  \end{align*}
  $\mathrm{(ii)}$ Fix an arbitrary $\psi \in \mathcal{S}(\mathbb{R})$ such that
  $\mathrm{supp}\,\widehat{\psi}\subseteq [-1,1]$ and $\widehat{\psi}(0)=1$.
  We define a family of functions $\{\phi_{\kappa}\}$ by $\phi_{\kappa}(y)=\kappa \psi(\kappa y)$.
  Then we have $\widehat{\phi_{\kappa}}(\xi)=\widehat{\psi}(\xi/\kappa)$
  and $\mathrm{supp}\,\widehat{\phi_{\kappa}}\subseteq [-\kappa,\kappa]$.
  It is easy to see that
  \[
  \int_{\mathbb{R}}\phi_{\kappa}(y) \mathrm{d}y=\widehat{\phi_{\kappa}}(0)=\widehat{\psi}(0)=1
  \]
  and
  \[
   \int_{\mathbb{R}}|\phi_{\kappa}(y)|y^2 \mathrm{d}y=\frac{1}{\kappa^2}\int_{\mathbb{R}}|\psi(y)|y^2 \mathrm{d}y.
  \]
  Therefore, there exists a sufficiently large $\kappa_0$ such that
  \[
  4\int_{\mathbb{R}}|\phi_{\kappa_0}(y)| y^2 \mathrm{d}y < \left| \int_{\mathbb{R}}\phi_{\kappa_0}(y) \mathrm{d}y \right|.
  \]
  It follows from the result of $\mathrm{(i)}$ that $\kappa_0 >2\sqrt{2}$.
\end{proof}

Now we are in a position to prove Theorem \ref{p<2}.
\begin{proof}[Proof of Theorem \ref{p<2}]
$\mathrm{(i)}$ We first consider the one-dimensional case.
By Lemma \ref{lem:s1t1} $\mathrm{(ii)}$, there exist some $\kappa_0 >2\sqrt{2}$ and $\eta \in \mathcal{S} (\mathbb{R})$
such that $\mathrm{supp}\, \widehat{\eta} \subseteq [-\kappa_0, \kappa_0]$ and
\[
4\int_{\mathbb{R}}|\eta(x)| x^2 \mathrm{d}x < \left| \int_{\mathbb{R}}\eta(x) \mathrm{d}x \right|.
\]
We define the family of functions $\{f_{\nu}\}$ by
\[
f_{\nu}(x)=\eta\left(\frac{x}{\nu}\right)e^{\frac{2ix}{\nu^2}}, \quad 0<\nu< \frac{1}{100\kappa_0}.
\]
Then we have
\begin{align*}
\widehat{f_{\nu}}(\xi)
& = \int_{\mathbb{R}} \eta\left(\frac{x}{\nu}\right)e^{\frac{2ix}{\nu^2}}e^{-ix\xi}\mathrm{d} x
  =\nu \int_{\mathbb{R}} \eta\left(\lambda\right)e^{\frac{2i\lambda}{\nu}}e^{-i\lambda(\nu\xi)}\mathrm{d} \lambda\\
& =\nu \widehat{\eta}\left(\nu\xi-\frac{2}{\nu}\right).
\end{align*}
Therefore, we obtain
\[
\left| \mathcal{F}^{-1}\left((1+\xi^2)^{s/2}\widehat{f_\nu}(\xi)\right)(y)\right|
= \frac{\nu}{2\pi}\left|\int_{\mathbb{R}}e^{iy\xi} (1+\xi^2)^{s/2}
   \widehat{\eta}\left(\nu\xi-\frac{2}{\nu}\right)\mathrm{d} \xi\right|.
\]
By a change of variables $\theta=\nu\xi-2/\nu$, we get
\begin{align*}
   \left| \mathcal{F}^{-1}\left((1+\xi^2)^{s/2}\widehat{f_\nu}(\xi)\right)(y)\right|
& = \frac{1}{2\pi}\left|\int_{\mathbb{R}}e^{i(y/\nu)\theta} \left(1+\frac{(\nu\theta+2)^2}{\nu^4}\right)^{s/2}
     \widehat{\eta}(\theta)\mathrm{d} \theta\right| \\
& = \frac{1}{\nu^{2s}}\left| \mathcal{F}^{-1}((\nu^4+(\nu\theta +2)^2)^{s/2}
     \widehat{\eta}(\theta))\left( \frac{y}{\nu} \right)\right|.
\end{align*}
Thus we have
\begin{align*}
\|f_{\nu}\|_{L_s^p(\mathbb{R})}
&=\left\|\mathcal{F}^{-1} \left((1+\xi^2)^{s/2}\widehat{f_\nu}(\xi)\right)
  \right\|_{L^{p}(\mathbb{R})}\\
&=\nu^{1/p-2s}\left\| \mathcal{F}^{-1} ((\nu^4+(\nu\theta +2)^2)^{s/2} \widehat{\eta}(\theta)) \right\|_{L^{p}(\mathbb{R})}.
\end{align*}
We claim that there exists a constant $C$ independent of $\nu$, such that
\begin{equation}\label{eq:s2t7}
  \| \mathcal{F}^{-1} ((\nu^4+(\nu\theta +2)^2)^{s/2} \widehat{\eta}(\theta)) \|_{L^{p}(\mathbb{R})} \leq C.
\end{equation}
Indeed, it suffices to show that
\[
\left| \mathcal{F}^{-1}((\nu^4+(\nu\theta +2)^2)^{s/2} \widehat{\eta}(\theta))(y)\right|
 \lesssim \min\left\{1,\frac{1}{|y|^2}\right\}.
\]
Since the boundedness is trivial, we only need to consider the case where $|y|>1$.
By integration by parts, we obtain
\begin{align*}
 \mathcal{F}^{-1}( (\nu^4+(\nu\theta +2)^2)^{s/2} \widehat{\eta}(\theta) )(y)
&=\frac{1}{2\pi} \int_{-\kappa_0}^{\kappa_0} e^{iy\theta}(\nu^4+(\nu\theta +2)^2)^{s/2}\widehat{\eta}(\theta)\mathrm{d} \theta\\
&=\frac{-1}{2\pi y^2} \int_{-\kappa_0}^{\kappa_0} e^{iy\theta}  g_{\nu}''(\theta) \mathrm{d} \theta ,
\end{align*}
where $g_{\nu}(\theta)=(\nu^4+(\nu\theta +2)^2)^{s/2}\widehat{\eta}(\theta)$.
Recall that $0<\nu<1/(100\kappa_0)$ and $\theta \in [-\kappa_0,\kappa_0]$.
It is easy to verify that $|g_{\nu}''(\theta)|\leq M$, where $M$ is independent of $\nu$.
Then we arrive at (\ref{eq:s2t7}). Thus we have $\|f_{\nu}\|_{L_s^p(\mathbb{R})}\lesssim \nu^{1/p-2s}$.

For any $x$ belonging to $(1/4,1)$, we take $t_x=\nu^2 x/4$.
Arguing as in \cite[Theorem, Page 207]{DK}, we write
\begin{align*}
|e^{it_x\Delta}f_{\nu}(x)|
&=\frac{1}{2\pi}\left|\int_{\mathbb{R}}e^{ix\xi}e^{-it_x \xi^2}\widehat{f_{\nu}}(\xi) \mathrm{d} \xi \right|\\
&= \left|\frac{1}{e^{\pi i/4}\sqrt{4\pi t_x}} \int_{\mathbb{R}} e^{i(x-y)^2/(4t_x)} f_{\nu}(y) \mathrm{d} y \right|\\
&\sim \frac{1}{\nu\sqrt{x}}\left| \int_{\mathbb{R}}e^{iy^2/(\nu^2 x)} \eta\left(\frac{y}{\nu}\right) \mathrm{d} y\right|\\
&\geq \frac{1}{\sqrt{x}}\left| \left|\int_{\mathbb{R}} \eta(y) \mathrm{d} y \right|
       -\left|\int_{\mathbb{R}} \eta(y) (e^{iy^2/x}-1) \mathrm{d} y \right| \right|\\
&\gtrsim \left|\int_{\mathbb{R}} \eta(y) \mathrm{d} y \right|-\frac{1}{x}\int_{\mathbb{R}} |\eta(y)| y^2 \mathrm{d} y\\
&\gtrsim 1,
\end{align*}
where we have used Lemma \ref{lem:s1t1} $\mathrm{(ii)}$ in the last inequality.
Therefore, we obtain
\[
\sup_{0<t<1} |e^{it\Delta} f_{\nu}(x)| > C_0 ,\quad \forall x\in (1/4,1),
\]
where $C_0$ is independent of $\nu$.
Suppose on the contrary that (\ref{eq:s2t1}) holds for all $f\in L_s^p (\mathbb{R})$ with $s<1/(2p)$. Then
\[
C_0 |(1/4,1)|^{1/q}
\leq \sup_{\delta >0}\delta \left| \left\{x\in B(0,1):\sup_{0<t<1}|e^{it\Delta} f_{\nu}(x)|>\delta \right\} \right|^{1/q}
 \lesssim \|f_{\nu}\|_{L_s^p (\mathbb{R})}.
\]
That is, $\nu^{1/p-2s}\gtrsim 1$. Letting $\nu\rightarrow 0$ gives the desired contradiction.

$\mathrm{(ii)}$ Next, we consider the case $n\geq 2$ and $1\leq p<2$.
Let $\eta \in C_0^\infty (\mathbb{R})$ be a nonnegative function
which is supported on $[1,4]$ and satisfies $\eta (2\sqrt{2})=1$.
We define the sequence $\{\eta_k\}_{k\geq 1}$ by $\eta_k (\xi)=\eta (\xi/2^k)$.
It follows that each $\eta_k$ is supported on $[2^k, 2^{k+2}]$ and
\[
\left| \frac{d^j}{d \xi^j} \eta_k (\xi) \right| \lesssim \frac{1}{2^{kj}}, \quad j\geq 0.
\]
Let $\widehat{f_k}(\xi)=\eta_k (|\xi|)$.
By equality (\ref{eq:s1t2}), we obtain
\begin{align*}
  |e^{it\Delta} f_k (x)| & \sim \left| \int_{\mathbb{R}^n}e^{-it|\xi|^2}e^{ix\cdot \xi} \eta_k (|\xi|)\mathrm{d} \xi \right| \\
    & =  \left| \int_{2^k}^{2^{k+2}} e^{-itr^2}\eta_k (r) r^{n-1}
          \left( \int_{\mathbb S^{n-1}}e^{i(rx)\cdot u}\mathrm{d} \sigma(u) \right) \mathrm{d} r\right| \\
    & = \left| \int_{2^k}^{2^{k+2}} e^{-itr^2}\eta_k (r) r^{n-1}
         \left( \frac{C_1 e^{ir|x|}+ C_2 e^{-ir|x|}}{|rx|^{(n-1)/2}}
         +\frac{C_3 R_{(n-2)/2}(r|x|)}{|rx|^{(n-2)/2}}  \right) \mathrm{d} r\right| \\
    & \geq \frac{|C_1|}{|x|^{(n-1)/2}}
       \left|\int_{2^k}^{2^{k+2}} e^{-i(tr^2-r|x|)} r^{(n-1)/2} \eta_k (r) \mathrm{d} r \right| \\
    & \qquad -\frac{|C_2|}{|x|^{(n-1)/2}}
       \left|\int_{2^k}^{2^{k+2}} e^{-i(tr^2+r|x|)} r^{(n-1)/2} \eta_k (r) \mathrm{d} r \right| \\
    & \qquad -\frac{|C_3|}{|x|^{(n-2)/2}}
       \left|\int_{2^k}^{2^{k+2}} e^{-itr^2} r^{n/2} \eta_k (r) R_{(n-2)/2}(r|x|) \mathrm{d} r \right|\\
    & :=I_1(x)-I_2(x)-I_3(x).
\end{align*}
We first estimate $I_3 (x)$.
Note that $|R_{(n-2)/2}(r|x|)|\lesssim 1/(r|x|)^{3/2}$ (see \cite[Appendix B.8]{G249} for example).
Therefore, for any $x\in B(0,1)$ satisfying $1/2 \leq |x|< 1$, we have
\begin{equation}\label{eq:s2t2}
  I_3 (x)\lesssim \int_{2^k}^{2^{k+2}} r^{(n-3)/2} \mathrm{d} r \lesssim 2^{k(n-1)/2}.
\end{equation}
We proceed to estimate $I_1 (x)$.
For each $x\in B(0,1)$ satisfying $1/2 \leq |x|< 1$, we take $t_x =|x|/2^{k+5/2}$.
Consequently, we get
\[
  I_1 (x)=\frac{|C_1|}{|x|^{(n-1)/2}}
           \left|\int_{2^k}^{2^{k+2}} e^{-i2^k |x|(r^2/2^{2k+5/2}-r/2^k)} r^{(n-1)/2} \eta_k (r) \mathrm{d} r \right|.
\]
Let $\lambda =2^k |x|$ and $\phi_1 (r)=r^2/2^{2k+5/2}-r/2^k$.
It is easy to see that $\phi_1' (2^{k+3/2})=0$ and $\phi_1'' (2^{k+3/2})=1/2^{2k+3/2}$.
Then by Corollary \ref{cor:s1t1}, we obtain
\begin{align*}
  I_1 (x) & =\frac{|C_1|}{|x|^{(n-1)/2}} \left|\int_{2^k}^{2^{k+2}}
          e^{-i\lambda(\phi_1 (r)-\phi_1 (2^{k+3/2}))} r^{(n-1)/2} \eta_k (r) \mathrm{d} r \right| \\
          & \gtrsim \frac{1}{|x|^{(n-1)/2}} \left[
          (2^{2k+3/2})^{1/2}(2^{k+3/2})^{(n-1)/2} \eta_k(2^{k+3/2})(2^k |x|)^{-1/2}-O((2^k |x|)^{-1/2}) \right].
\end{align*}
It follows from $\eta_k(2^{k+3/2})=\eta(2\sqrt{2})=1$ that
\begin{equation}\label{eq:s2t3}
  I_1 (x)\gtrsim \frac{2^{k+k(n-1)/2-k/2}}{|x|^{n/2}}\gtrsim 2^{nk/2}.
\end{equation}
Finally, we estimate $I_2 (x)$. We write
\[
I_2(x) = \frac{|C_2|}{|x|^{(n-1)/2}} \left| \int_{2^k}^{2^{k+2}}
   e^{-i|x|\left( r^2/2^{k+5/2} + r \right)} r^{(n-1)/2} \eta_k(r) \, \mathrm{d} r \right|.
\]
Let $\phi_2(r) = r^2/2^{k+5/2} + r$. Then $\phi_2'(r) > 1$ for all $r$ belonging to $(2^k, 2^{k+2})$.
Applying Proposition \ref{pro:s1t1}, we obtain for $1/2 \leq |x|< 1$,
\begin{align}\label{eq:s2t4}
I_2(x)
&\lesssim \frac{1}{|x|^{(n+1)/2}} \left( \int_{2^k}^{2^{k+2}}
           \left| \frac{n-1}{2} r^{(n-3)/2} \eta_k(r) + r^{(n-1)/2} \eta_k'(r) \right| \mathrm{d} r
           +\left\| r^{(n-1)/2} \eta_k(r) \right\|_{L^{\infty}} \right) \notag \\
&\lesssim 2^{k(n-1)/2}+2^{k(n+1)/2-k}+2^{k(n-1)/2} \lesssim 2^{k(n-1)/2}.
\end{align}
Combining (\ref{eq:s2t2}), (\ref{eq:s2t3}) and (\ref{eq:s2t4}),
for $1/2 \leq |x|< 1$ and sufficiently large $k$, we have
\begin{equation}\label{eq:s2t5}
  \sup_{0<t<1}|e^{it\Delta} f_k (x)|\geq |e^{it_x\Delta}f_k (x)| \gtrsim 2^{nk/2}.
\end{equation}
To complete the proof, we only need to verify that
\begin{equation}\label{eq:s2t6}
 \|f_k\|_{L_s^p} \lesssim 2^{k(s+n(1-1/p))}.
\end{equation}
In fact,
\[
\|f_k\|_{L_s^p(\mathbb{R}^n)}
:= \left\| \mathcal{F}^{-1} \left( \widehat{f}_k(\cdot) (1+|\cdot|^2)^{s/2} \right) \right\|_{L^{p}(\mathbb{R}^n)}
 =  \left\| \mathcal{F}^{-1} \left( \eta_k(|\cdot|) (1+|\cdot|^2)^{s/2} \right) \right\|_{L^p(\mathbb{R}^n)}.
\]
By a change of variables $\nu =\xi/2^k$, we obtain
\begin{align*}
\mathcal{F}^{-1} \left( \eta_k(|\cdot|) (1+|\cdot|^2)^{s/2} \right)(x)
&= \frac{1}{(2\pi)^n} \int_{\mathbb{R}^n} \eta_k(|\xi|) (1+|\xi|^2)^{s/2} e^{i x\cdot \xi} \, \mathrm{d} \xi \\
&= \frac{1}{(2\pi)^n} \int_{\mathbb{R}^n} \eta(|\xi/2^k|) (1+|\xi|^2)^{s/2} e^{i x\cdot \xi} \, \mathrm{d} \xi \\
&= \frac{2^{k(n+s)}}{(2\pi)^n} \int_{\mathbb{R}^n}
    \eta(|\nu|) (1/2^{2k} + |\nu|^2)^{s/2} e^{i (2^k x)\cdot \nu} \, \mathrm{d} \nu.
\end{align*}
Let $g_k (\nu):= \eta(|\nu|) (|\nu|^2 + 1/2^{2k})^{s/2}$. Then we have
\[
\left\| \mathcal{F}^{-1} \left( \eta_k(|\cdot|) (1+|\cdot|^2)^{s/2} \right) \right\|_{L^p(\mathbb{R}^n)}
 \sim 2^{k(n+s-n/p)} \left\| \mathcal{F}^{-1}(g_k) \right\|_{L^p(\mathbb{R}^n)}.
\]
All that remains is to verify that
there exists a constant $C>0$ independent of $k$ such that $\|\mathcal{F}^{-1}(g_k)\|_{L^p(\mathbb{R}^n)} \leq C$.
It suffices to prove $|\mathcal{F}^{-1}(g_k)(x)|\lesssim \min\{1,1/|x|^{n+1}\}$.
Since the boundedness is trivial, we only need to consider the case where $|x|>1$. Note that
\[
e^{i x \cdot \xi} = \frac{1}{(i |x|^2)^{n+1}} \sum_{|\beta| = n+1} C_\beta x^\beta
                     \partial_\xi^\beta \left( e^{i x \cdot \xi} \right).
\]
By integrating by parts, we obtain
\[
\mathcal{F}^{-1}g_k(x)=\frac{1}{(2\pi)^n}\cdot\frac{1}{(i|x|^2)^{n+1}}
 \sum_{|\beta|=n+1}C_\beta' x^\beta\int_{\mathbb{R}^n}\left(\partial_\xi^\beta g_k(\xi)\right) e^{ix\cdot\xi}\mathrm{d} \xi.
\]
Recall that $\eta(|\cdot|)\in C_c^\infty(\mathbb{R}^n)$ with support on $\{\xi\in\mathbb{R}^n:1\leq|\xi|\leq 4\}$.
Therefore, we can verify that
\[
\left|\partial_\xi^\beta g_k(\xi)\right|\leq\frac{C_{\beta,n,s}}{(1+|\xi|)^{n+1}}.
\]
Combining this with $|x^\beta|\lesssim|x|^{|\beta|}=|x|^{n+1}$ gives $\left|\mathcal{F}^{-1}g_k(x)\right|\lesssim1/|x|^{n+1}$.
Then we arrive at the inequality (\ref{eq:s2t6}).

Assume (\ref{eq:s2t1}) holds for all $f\in L_s^p (\mathbb R^n)$ with $s<n(1/p-1/2)$.
Then setting $\delta =2^{nk/2}$ in (\ref{eq:s2t1}) yields
\[
\left| \left\{x\in B(0,1):\sup_{0<t<1}|S_t f_k(x)| \gtrsim 2^{nk/2} \right\} \right|^{1/q}
 \lesssim \frac{1}{2^{nk/2}} \|f_k\|_{L_s^p (\mathbb R^n)}.
\]
It follows from (\ref{eq:s2t5}) and (\ref{eq:s2t6}) that
\begin{align*}\
        |B(0,1)\setminus B(0,1/2)|^{1/q}
    &= \left|\left\{x\in B(0,1)\setminus B(0,1/2):
        \sup_{0<t<1}|e^{it\Delta} f_k(x)| \gtrsim 2^{nk/2} \right\} \right|^{1/q} \\
    &\lesssim \frac{1}{2^{nk/2}} \|f_k\|_{L_s^p (\mathbb R^n)} \lesssim 2^{k(s+n(1/2-1/p))}.
\end{align*}
Letting $k\rightarrow \infty$ gives a contradiction.

$\mathrm{(iii)}$ Finally, we consider the case $n\geq 2$ and $2\leq p< (2n-1)/(n-1)$.
Let $\eta \in C_0^\infty (\mathbb{R})$ be a nonnegative function
which is supported on $[-1,1]$ and satisfies $\eta (0)=1$.
We define the sequence $\{f_k\}_{k\geq 1}$ by $\widehat{f_k} (\xi)=\eta ((|\xi|-2^k)/2^{k/2})$.
Then by the Hausdorff-Young's inequality, we have
\begin{align*}
  \|f_k\|_{L_s^p(\mathbb{R}^{n})} &=\|\mathcal{F}^{-1}((1+|\xi|^2)^{s/2}\widehat{f_k})\|_{L^p(\mathbb{R}^n)}
                     \lesssim  \left\|(1+|\cdot|^2)^{s/2}\eta\left(\frac{|\cdot|-2^k}{2^{k/2}}\right)\right\|_{L^{p'}(\mathbb{R}^n)}\\
                  &=\left( \int_{\{\xi:2^k-2^{k/2}\leq |\xi|\leq 2^k+2^{k/2}\}} (1+|\xi|^2)^{sp'/2}
                     \left|\eta\left(\frac{|\xi|-2^k}{2^{k/2}}\right)\right|^{p'} \mathrm{d} \xi \right)^{1/p'} \\
                  &\lesssim \left( \int_{2^k-2^{k/2}}^{2^k+2^{k/2}} (1+r^2)^{sp'/2} r^{n-1} \mathrm{d} r \right)^{1/p'}\\
                  &\lesssim 2^{ks+k(n-1/2)/p'}.
\end{align*}
It follows from (\ref{eq:s1t2}) that
\begin{align*}
     |e^{it\Delta} & f_k (x)|
   \sim \left|\int_{\mathbb{R}^n}e^{-it|\xi|^2}e^{ix\cdot \xi}
      \eta\left(\frac{|\xi|-2^k}{2^{k/2}}\right)\mathrm{d} \xi \right| \\
    &= \left| \int_{2^k-2^{k/2}}^{2^k+2^{k/2}} e^{-itr^2} \eta\left(\frac{r-2^k}{2^{k/2}}\right) r^{n-1}
        \left( \int_{\mathbb S^{n-1}}e^{i(rx)\cdot u}\mathrm{d} \sigma(u) \right) \mathrm{d} r\right| \\
    &= \left| \int_{2^k-2^{k/2}}^{2^k+2^{k/2}} e^{-itr^2} \eta\left(\frac{r-2^k}{2^{k/2}}\right) r^{n-1}
        \left( \frac{C_1 e^{ir|x|}+ C_2 e^{-ir|x|}}{|rx|^{(n-1)/2}}
        +\frac{C_3R_{(n-2)/2}(r|x|)}{|rx|^{(n-2)/2}}  \right) \mathrm{d} r\right| \\
    &\geq \frac{|C_1|}{|x|^{(n-1)/2}} \left|\int_{2^k-2^{k/2}}^{2^k+2^{k/2}} e^{-i(tr^2-r|x|)} r^{(n-1)/2}
      \eta\left(\frac{r-2^k}{2^{k/2}}\right) \mathrm{d} r \right| \\
    &\qquad -\frac{|C_2|}{|x|^{(n-1)/2}} \left|\int_{2^k-2^{k/2}}^{2^k+2^{k/2}} e^{-i(tr^2+r|x|)} r^{(n-1)/2}
      \eta\left(\frac{r-2^k}{2^{k/2}}\right) \mathrm{d} r \right| \\
    &\qquad -\frac{|C_3|}{|x|^{(n-2)/2}} \left|\int_{2^k-2^{k/2}}^{2^k+2^{k/2}} e^{-itr^2} r^{n/2}
      \eta\left(\frac{r-2^k}{2^{k/2}}\right) R_{(n-2)/2}(r|x|) \mathrm{d} r \right| \\
    &:=II_1(x)-II_2(x)-II_3(x).
\end{align*}
For any $x$ satisfying $1/2\leq |x|<1$, we set $t_x =|x|/2^{k+1}$. Then we have
\[
II_1 (x)=\frac{|C_1|}{|x|^{(n-1)/2}}\left|\int_{2^k-2^{k/2}}^{2^k+2^{k/2}} e^{-i2^k |x|(r^2/2^{2k+1}-r/2^k)} r^{(n-1)/2}
          \eta \left(\frac{r-2^k}{2^{k/2}}\right) \mathrm{d} r \right|.
\]
Let $\lambda =2^k |x|$ and $\varphi (r)=r^2/2^{2k+1}-r/2^k$.
It is easy to see that $\varphi' (2^k)=0$ and $\varphi'' (2^k)=1/2^{2k}$.
Then by Corollary \ref{cor:s1t1}, we obtain
\begin{align*}
  II_1 (x) & =\frac{|C_1|}{|x|^{(n-1)/2}} \left|\int_{2^k-2^{k/2}}^{2^k+2^{k/2}} e^{-i\lambda(\varphi (r)-\varphi(2^k))}
               r^{(n-1)/2} \eta\left(\frac{r-2^k}{2^{k/2}}\right) \mathrm{d} r \right| \\
           & \gtrsim \frac{1}{|x|^{(n-1)/2}} \left[ (2^{2k})^{1/2} 2^{k(n-1)/2}
              \eta(0) (2^k |x|)^{-1/2}-O((2^k |x|)^{-1/2}) \right] \gtrsim 2^{kn/2}.
\end{align*}
Moreover, similarly to the proofs of (\ref{eq:s2t2}) and (\ref{eq:s2t4}),
we have $\max\{II_2 (x), II_3 (x)\}\lesssim 2^{k(n-1)/2}$ for sufficiently large $k$.
Therefore, we obtain
\[
\sup_{0<t<1}|e^{it\Delta} f_k(x)|\geq |e^{it_x\Delta} f_k(x)|\gtrsim 2^{kn/2}
\]
holds for any $1/2\leq |x|<1$.
Suppose on the contrary that (\ref{eq:s2t1}) holds for all $f\in L_s^p (\mathbb{R}^n)$
with $s<(1/2-n)(1/2-1/p)+1/4=(n-1/2)(1/2-1/p')+1/4$. Then
\begin{align*}\
        |B(0,1)\setminus B(0,1/2)|^{1/q}
    &= \left|\left\{x\in B(0,1)\setminus B(0,1/2):
        \sup_{0<t<1}|e^{it\Delta} f_k(x)| \gtrsim 2^{nk/2} \right\} \right|^{1/q} \\
    &\lesssim \frac{1}{2^{nk/2}} \|f_k\|_{L_s^p (\mathbb R^n)} \lesssim 2^{k(s+(n-1/2)/p'-n/2)}.
\end{align*}
Letting $k\rightarrow \infty$ gives the desired contradiction.
\end{proof}

\begin{remark}
It is straightforward to see that for $p=2$,
Theorem \ref{p<2} recovers the necessary conditions for pointwise convergence established by
Dahlberg and Kenig \cite{DK} (for $n=1$) and Vega \cite{V} (for $n\geq 2$), respectively.
\end{remark}


\end{document}